\newcommand{\msc}[2][2000]{%
  \let\@oldtitle\@title%
  \gdef\@title{\@oldtitle\footnotetext{#1 \emph{Mathematics subject
        classification.} #2}}%
}
\theoremstyle{plain}
\newtheorem{theorem}{Theorem}[section]
\newtheorem{lemma}[theorem]{Lemma}
\newtheorem{corollary}[theorem]{Corollary}
\theoremstyle{remark}
\newtheorem{remark}[theorem]{Remark}
\def\C{{\mathbb C}}
\def\R{{\mathbb R}}
\def\N{{\mathbb N}}
\def\H{{\mathcal H}}
\def\O{\mathcal O}
\def\F{\mathcal F}
\def\({\left(}
\def\){\right)}
\def\<{\left\langle}
\def\>{\right\rangle}
\def\le{\leqslant}
\def\ge{\geqslant}
\def\Tend#1#2{\mathop{\longrightarrow}\limits_{#1\rightarrow#2}}
\def\d{{\partial}}
\def\eps{\varepsilon}
\def\si{{\sigma}}
\def\1{{\mathbf 1}}
\def\w{{w}}
\DeclareMathOperator{\RE}{Re}
\DeclareMathOperator{\IM}{Im}
\def\FF{F}
\def\GG{G}
\numberwithin{equation}{section}
\newcommand{\mynegspace}{\hspace{-0.12em}}
\newcommand{\vvvert}{\rvert\mynegspace\rvert\mynegspace\rvert}
\begin{document}

\title[WKB analysis of generalized  DNLS]{WKB analysis of
  generalized derivative nonlinear Schr\"odinger equations without
  hyperbolicity}   

\author[R. Carles]{R\'emi Carles}
\author[C. Gallo]{Cl\'ement Gallo}
\address{CNRS \& Univ. Montpellier\\Institut Montpelli\'erain
  Alexander Grothendieck
\\CC51\\Place E. Bataillon\\34095 Montpellier\\ France}
\email{Remi.Carles@math.cnrs.fr}
\email{Clement.Gallo@umontpellier.fr}

\begin{abstract}
We consider the semi-classical limit of nonlinear Schr\"odinger
equations in the presence of both a polynomial nonlinearity and the
derivative in space of a polynomial nonlinearity. By working in a class
of analytic initial data, we do not have to assume any hyperbolic
structure on the (limiting) phase/amplitude system. The solution, its
approximation, and 
the error estimates are considered in time dependent analytic
regularity. 
\end{abstract}
\thanks{This work was supported  by the French ANR projects
  BECASIM
  (ANR-12-MONU-0007-04) and BoND (ANR-13-BS01-0009-01).}  
\maketitle

\section{Introduction}

\subsection{Setting}
\label{sec:setting}

We consider the equation
\begin{equation}
  \label{eq:nls}
  i\eps\d_t u^\eps +\frac{\eps^2}{2}\partial_x^2 u^\eps +i\frac{\eps}{2}\partial_x\left(g(|u^\eps|^2)u^\eps\right)-f(|u^\eps|^2)u^\eps= 0,\quad (t,x)\in [0,T]\times \R,
\end{equation}
in the semi-classical limit $\eps \to 0$, where $f,g$ are polynomials
and $g(0)=f(0)=0$. The typical example we consider here is 
\begin{equation}
  \label{eq:NL}
  g(s)=\alpha
s^\gamma,\quad f(s)=\lambda s^\sigma,\quad \text{where }\alpha,\lambda
\in\R \text{ and }
\gamma,\sigma\in\N\setminus\{0\},
\end{equation}
but $f$ and $g$ need not be monomials.
The initial data that we 
consider are WKB states (also known as Lagrangian states):
\begin{equation}
  \label{eq:ci-u}
  u^\eps(0,x)=a_0^\eps(x)e^{i\phi_0^\eps(x)/\eps}=:u_0^\eps(x),
\end{equation}
where $\phi_0^\eps:\R\to \R$ is a real-valued phase, and $a_0^\eps:\R\to \C$
is a possibly complex-valued amplitude. Our goal is to understand the
semi-classical limit of equation (\ref{eq:nls}), that is to describe
the behaviour in the limit $\eps\to 0$ of the solutions to
(\ref{eq:nls}) with initial data (\ref{eq:ci-u}). We consider
$\eps$-dependent initial phase and amplitude, but they can be thought
of as $\eps$-independent, or having an asymptotic in powers of $\eps$,
as will be discussed below.  

In the case $g=0$, we recover the more standard nonlinear
Schr\"odinger equation, modelling for instance Bose--Einstein
condensation (see e.g. \cite{JosserandPomeau,PiSt}). The case $f=0$
corresponds to the derivative Schr\"odinger equation, describing
Alfv\'en waves (see e.g. \cite{KhRa82,MOMT76,Mj76,Sulem}). The cubic
cases, 
$\alpha=0$ with $\si=1$, or 
$\gamma=\sigma=1$, with $\lambda=-1$, are known to be completely
integrable (\cite{AblowitzClarkson,WaKoIc79,ZS}). Derivation and
analysis in non-cubic cases can 
be found in e.g. \cite{DuLaSz16,HaOz94,LiSiSu13,Oz96}. 

\subsection{Formal limit: hydrodynamical structure}
\label{sec:formal}

Assuming that $a_0^\eps$ is real-valued, the standard approach known
as Madelung 
transform consists in seeking the solution $u^\eps$ under the form
$u^\eps= a^\eps e^{i\phi^\eps/\eps}$ with $a^\eps$ and $\phi^\eps$
real-valued. Plugging such an expression into \eqref{eq:nls} and
separating real and imaginary parts yields
\begin{equation}\label{eq:syst-madelung}
  \left\{
    \begin{aligned}
      &\d_t \phi^\eps + \frac{1}{2}(\partial_x
      \phi^\eps)^2+\frac{1}{2}g\(|a^\eps|^2\)\partial_x\phi^\eps +f\(
      |a^\eps|^2\)=\frac{\eps^2}{2} \frac{\d_x^2 a^\eps}{a^\eps}  ,\quad \phi^\eps_{\mid t=0}=\phi_0^\eps,\\
& \d_t a^\eps + \d_x \phi^\eps \d_x a^\eps
+\frac{1}{2}a^\eps\d_x^2 \phi^\eps+\frac{1}{2}\d_x\( g\(|a^\eps|^2\)a^\eps\)=
0,\quad a^\eps_{\mid t=0}=a_0^\eps.
    \end{aligned}
\right.
\end{equation}
Making this approach rigorous can be a delicate issue, especially when
$a^\eps$ has zeroes (see \cite{CaDaSa12} and references
therein). Remaining at a formal level, in the limit $\eps\to 
0$, the quantum pressure (right hand side of the equation for
$\phi^\eps$) vanishes, and we get
\begin{equation}\label{eq:syst-limit}
  \left\{
    \begin{aligned}
      &\d_t \phi + \frac{1}{2}(\partial_x
      \phi)^2+\frac{1}{2}g\(|a|^2\)\partial_x\phi +f\(
      |a|^2\)=0,\quad \phi_{\mid t=0}=\phi_0,\\
& \d_t a + \d_x \phi \d_x a
+\frac{1}{2}a\d_x^2 \phi+\frac{1}{2}\d_x\( g\(|a|^2\)a\)=
0,\quad a_{\mid t=0}=a_0,
    \end{aligned}
\right.
\end{equation}
where we have supposed also that the initial phase and amplitude
converge, $\phi_0^\eps\to \phi_0$ and $a_0^\eps \to a_0$, as $\eps\to
0$. Note that this formal convergence remains in the case where
$a_0^\eps$ is complex-valued. Introducing $\rho=|a|^2$ and $v=\d_x
\phi$, \eqref{eq:syst-limit} yields
\begin{equation}
  \label{eq:euler-gen}
  \left\{
    \begin{aligned}
      & \d_t v + v\d_x v +\frac{1}{2}\d_x\(g(\rho)v\) +\d_x
      f(\rho)=0,\quad v_{\mid t=0}=\d_x \phi_0,\\
& \d_t \rho+\d_x(\rho v) +\d_xQ(\rho)=0,\quad \rho_{\mid t=0}=|a_0|^2,
    \end{aligned}
\right.
\end{equation}
where 
\begin{equation*}
  Q(\rho) = \rho g(\rho) -\frac{1}{2}\int_0^\rho g(r)dr.
\end{equation*}
This is a generalized compressible Euler equation. We recover the
standard isentropic Euler equation when $g=0$, with pressure law
$p(\rho) =\rho f(\rho)-\int_0^\rho f(r)dr$. 
\subsection{Rigorous limit: mathematical setting}
\label{sec:rigor1}

As noticed in \cite{Grenier98}, if $g=0$ and $f\in C^\infty$ is such
that $f'>0$ (not necessarily assuming that $f$ is polynomial), the
system \eqref{eq:euler-gen} is hyperbolic. Based on this important
remark, it is possible to justify the semi-classical limit in Sobolev
spaces $H^s(\R)$, locally in time (so long as the solution to the
Euler equation \eqref{eq:euler-gen} remains smooth, that is, in
particular, on a time interval independent of $\eps$). The assumption
$f'>0$ was relaxed to cases where \eqref{eq:euler-gen} is hyperbolic
with $f'\ge 0$ (the nonlinearity need not be cubic at the origin) in
\cite{ACARMA,ChironRousset}. 
\smallbreak

The idea of Grenier consists in modifying the Madelung transform, by
allowing the amplitude $a^\eps$ to be complex-valued, and taking
advantage to this new degree of freedom (compared to the Madelung
transform) to consider 
\begin{equation}\label{eq:syst-grenier}
  \left\{
    \begin{aligned}
      &\d_t \phi^\eps + \frac{1}{2}(\partial_x
      \phi^\eps)^2+\frac{1}{2}g\(|a^\eps|^2\)\partial_x\phi^\eps +f\(
      |a^\eps|^2\)=0  ,\quad \phi^\eps_{\mid t=0}=\phi_0^\eps,\\ 
& \d_t a^\eps + \d_x \phi^\eps \d_x a^\eps
+\frac{1}{2}a^\eps\d_x^2 \phi^\eps+\frac{1}{2}\d_x\( g\(|a^\eps|^2\)a^\eps\)=
\frac{i\eps}{2}\d_x^2 a^\eps,\quad a^\eps_{\mid t=0}=a_0^\eps.
    \end{aligned}
\right.
\end{equation}
We have written directly the system in the presence of $g$, in view of
future references. It is readily checked that if $(\phi^\eps,a^\eps)$
solves \eqref{eq:syst-grenier}, 
then $u^\eps= a^\eps e^{i\phi^\eps/\eps}$ solves \eqref{eq:nls}. As
suggested above, the good unknown to work in Sobolev spaces is not
$(\phi^\eps,a^\eps)$, but rather $(\d_x \phi^\eps,a^\eps)$, or even
$(\d_x \phi^\eps,\RE a^\eps,\IM a^\eps)$. The system satisfied by this
unknown (readily obtained from  \eqref{eq:syst-grenier}) is a
\emph{skew-symmetric} perturbation of (the symmetric version of)
\eqref{eq:euler-gen}. 
 \smallbreak

In the case $g\not =0$, the semi-classical limit for \eqref{eq:nls} was
considered in  \cite{DeLiTs00} (case $f=0$) and \cite{DesjardinsLin}
(with $f,g\in C^\infty(\R_+;\R)$), by considering
\eqref{eq:syst-grenier}. However, in the case where $g\not =0$,
hyperbolicity is not a property that one has for free. In
\cite{DeLiTs00} (case $f=0$), the semi-classical analysis relies on
the assumption 
\begin{equation*}
  \d_x \phi^\eps g'>0,
\end{equation*}
where $\phi^\eps$ appears in \eqref{eq:syst-grenier}, and in
\cite{DesjardinsLin}, it relies on 
\begin{equation*}
  \d_x \phi^\eps g'+f'>0.
\end{equation*}
These assumptions are made to ensure the hyperbolicity for
\eqref{eq:syst-grenier}, but have the strong drawback to involve the
solution itself. 
\smallbreak

To overcome this issue, we work in a functional setting where
hyperbolicity is not needed. Assume $g=0$: if $f'<0$ ($\lambda<0$ in
\eqref{eq:NL}), then the Euler equation  \eqref{eq:euler-gen} is
elliptic. G.~M\'etivier \cite{GuyCauchy} has proved that in this case,
the only reasonable $C^1$ solutions to \eqref{eq:euler-gen} stem from
\emph{analytic} initial data. Indeed, if $\phi_0$ is analytic at some
point $x_0\in \R$ and  \eqref{eq:euler-gen} has a $C^1$ solution, then
$a_0$ is analytic at $x_0$. Therefore, if $\phi_0$ is analytic
(e.g. $\phi_0=0$) and $a_0$ is not, then  \eqref{eq:euler-gen} has no
$C^1$ solution. Conversely, if the initial data $a_0^\eps$ and
$\phi_0^\eps$ are analytic, then the semi-classical limit for
\eqref{eq:nls} with $g=0$ was studied in \cite{PGX93,ThomannAnalytic},
thanks to some tools developed by J.~Sj\"ostrand \cite{Sj82}, based
on complex analysis. We shall also work with analytic regularity, but
rather with a Fourier analysis point of view, introduced by J.~Ginibre
and G.~Velo \cite{GV01}. 
\smallbreak

Following \cite{GV01}, for $\w \ge  0$ and $\ell\ge 0$, we consider the space
\begin{equation*}
\H_\w^\ell=\{\psi\in L^2(\R),\quad \|\psi\|_{\H_\w^\ell}<\infty\},\quad
\text{where}\quad
  \|\psi\|_{\H_\w^\ell}^2 := 
               \int_{\R}\<\xi\>^{2\ell}e^{2\w\<\xi\>}   |\hat\psi(\xi)|^2d\xi,
\end{equation*}
with $\<\xi\>=\sqrt{1+\xi^2}$, and where the Fourier transform is defined by
\begin{equation*}
  \hat \psi(\xi)=\F \psi(\xi)=\frac{1}{\sqrt{2\pi}}\int_\R e^{-ix\xi}\psi(x)dx.
\end{equation*}
Note that if $\ell\ge 0$ and $\w>0$, $\H_\w^\ell$ is
continuously embedded in all Sobolev spaces $H^s$ for $s\in\R$.
The interest of considering a time-dependent, decreasing, weight $\w$
is that energy estimates become similar to parabolic estimates, since
\begin{equation}\label{eq:evol-norm}
  \frac{d}{dt}\|\psi\|_{\H_\w^\ell}^2 = 2\RE\<\psi,\d_t
  \psi\>_{\H_\w^\ell} +2\dot \w \|\psi\|_{\H_\w^{\ell+1/2}}^2.
\end{equation}
The last term may be understood as a gain of regularity ($\dot \w<0$).  
Mimicking our approach in \cite{CaGa-p} (where convergence results for
a numerical scheme in the semi-classical limit are established in the
case $g=0$), we will consider 
solutions to (\ref{eq:syst-grenier}) where the phase 
and the complex-valued amplitude both live in such spaces, for a weight
$\w=\w(t)=\w_0-Mt$, where $\w_0>0$ and $M>0$ are fixed. Such
spaces are also reminiscent of the framework considered in
\cite{MoVi11}. More precisely, for $T>0$, we will work in spaces such as
\begin{equation*}
C([0,T],\H_\w^\ell)=\left\{\psi\ |\
  \mathcal{F}^{-1}\(e^{\w(t)\<\xi\>}\hat{\psi}\)\in
  C([0,T],\H_0^\ell)=C([0,T],H^\ell)\right\}, 
\end{equation*}
where $H^\ell=H^\ell(\R)$ is the standard Sobolev space, or
\begin{equation*}
L^2([0,T],\H_\w^\ell)=L^2_T\H_\w^\ell=\left\{\psi\ |\
  \int_0^T\|\psi(t)\|_{\H_{\w(t)}^\ell}^2dt<\infty\right\}.
\end{equation*}
Phases and amplitudes belong to spaces
\begin{align*}
Y_{\w,T}^\ell=C([0,T],\H_\w^{\ell})\cap
L^2_T \H_\w^{\ell+1/2},
\end{align*}
and the fact that phase and amplitude do not have exactly the same
regularity shows up in the introduction of the space
\begin{align*}
X_{\w,T}^\ell=Y_{\w,T}^{\ell+1}\times Y_{\w,T}^\ell,
\end{align*}
which is reminiscent of the fact that in the hyperbolic case, the good
unknown is $(\d_x \phi^\eps,a^\eps)$ rather than $(\phi^\eps,a^\eps)$. 
The space $X_{\w,T}^\ell$ is endowed with the norm 
$$\|(\phi,a)\|_{X_{\w,T}^\ell}=\vvvert\phi\vvvert_{\ell+1,T}+\vvvert
a\vvvert_{\ell,T}, $$ 
where
\begin{equation}\label{eq:triple}
  \vvvert \psi\vvvert_{\ell,t}^2 = \max\left(\sup_{0\le s\le
    t}\|\psi(s)\|_{\H_{\w(s)}^\ell} ^2,2M\int_0^t 
  \|\psi(s)\|_{\H_{\w(s)}^{\ell+1/2}}^2 ds\right). 
\end{equation}

\subsection{Main results}
\label{sec:main-results}

Our first result states local well-posedness for
(\ref{eq:syst-grenier}) in this functional framework. To lighten our
statements as well as the proofs, we shall assume that $f$ and $g$ are
of the form \eqref{eq:NL}, but linear combinations of such functions
could be addressed as well, with heavier notations only.
\begin{theorem}\label{th:wp}
Let $\w_0>0$, $\ell>1$ and $(\phi_0^\eps,a_0^\eps)_{\eps\in[0,1]}$ be
a bounded family in $\H_{\w_0}^{\ell+1}\times\H_{\w_0}^{\ell}$. Then,
provided $M=M(\ell)>0$ is chosen sufficiently large, for all $\eps\in [0,1]$,
there is a unique solution $(\phi^\eps,a^\eps)\in X_{\w,T}^\ell$ to
\eqref{eq:syst-grenier}, where $\w(t)=\w_0-Mt$ and
$T=T(\ell)<\w_0/M$. Moreover, up to the choice of a possibly larger value for
$M$ (and consequently a smaller one for $T$), we have the estimates 
$$\vvvert\phi^\eps\vvvert_{\ell+1,T}^2\le
4\|\phi_0^\eps\|_{\H_{\w_0}^{\ell+1}}^2+\|a_0^\eps\|_{\H_{\w_0}^{\ell}}^{4\sigma},\qquad
\vvvert a^\eps\vvvert_{\ell,T}^2\le
2\|a_0^\eps\|_{\H_{\w_0}^{\ell}}^2.$$ 
\end{theorem}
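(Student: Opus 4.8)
The plan is to construct the solution by a standard iteration scheme and to close uniform estimates in $X_{\w,T}^\ell$ by a Cauchy--Kovalevskaya-type energy argument, in which the loss of one derivative in the nonlinear terms is compensated by the gain of regularity encoded in the time-decreasing weight $\w$. Given an iterate $(\phi^{(n)},a^{(n)})$, I would define $(\phi^{(n+1)},a^{(n+1)})$ by solving the two equations in \eqref{eq:syst-grenier} with the nonlinear coefficients frozen at level $n$: the phase is obtained by integrating in time, while $a^{(n+1)}$ solves a linear Schr\"odinger equation with variable coefficients, whose well-posedness in the $\H_\w^\ell$-scale is classical. The whole point is then to propagate, uniformly in $n$ and in $\eps\in[0,1]$, the target bounds of the statement.

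The basic tool is the energy identity \eqref{eq:evol-norm}, which for the decreasing weight $\w(t)=\w_0-Mt$ reads $\frac{d}{dt}\|\psi\|_{\H_\w^\ell}^2=2\RE\<\psi,\d_t\psi\>_{\H_\w^\ell}-2M\|\psi\|_{\H_\w^{\ell+1/2}}^2$, the last term providing a favourable gain of half a derivative. First I would record that the Schr\"odinger term contributes nothing to the real energy: in Fourier, $\RE\<a,\frac{i\eps}{2}\d_x^2 a\>_{\H_\w^\ell}=0$ since the integrand is purely imaginary. This is precisely the mechanism, inherited from Grenier's formulation, that renders all estimates uniform in $\eps$.

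The heart of the proof is the nonlinear estimate. For a typical quadratic term such as $\<\phi,(\d_x\phi)^2\>_{\H_\w^{\ell+1}}$ I would split the weight symmetrically, $\<D\>^{\ell+1}\<D\>^{\ell+1}=\<D\>^{\ell+3/2}\<D\>^{\ell+1/2}$, move half a derivative onto the test factor by duality, and then use that $\H_\w^s$ is a tame Banach algebra for $s>1/2$ (a consequence of $\<\xi\>\le\<\eta\>+\<\xi-\eta\>$ together with $e^{\w\<\xi\>}\le e^{\w\<\eta\>}e^{\w\<\xi-\eta\>}$). Since $\ell>1$, this yields a bound of the schematic form $\|\phi\|_{\H_\w^{\ell+3/2}}^2\|\phi\|_{\H_\w^{\ell+1}}$, in which the top regularity appears only through the phase gain norm $\H_\w^{\ell+3/2}$, all remaining factors staying at the base regularity. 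The coupling terms $\d_x\phi\,\d_x a$ and $a\,\d_x^2\phi$, and the polynomial contributions $g(|a|^2)$, $f(|a|^2)$, are handled identically, the polynomial degrees $\gamma,\sigma$ producing, through the algebra property, the powers $\|a\|_{\H_\w^\ell}^{4\sigma}$ visible in the statement.

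With these estimates in hand, I would add the two energy identities and absorb every gain-norm factor, by Young's inequality, into the corresponding favourable term, namely $-2M\|a\|_{\H_\w^{\ell+1/2}}^2$ for the amplitude and $-2M\|\phi\|_{\H_\w^{\ell+3/2}}^2$ for the phase; cross terms involving both gains are split between the two budgets. This absorption forces $M$ to dominate the current size of the base norms, which is exactly the meaning of ``$M$ sufficiently large'', the admissible time $T<\w_0/M$ shrinking accordingly. The two constituents of the triple norm \eqref{eq:triple} then arise naturally: integrating an inequality of the form $\frac{d}{dt}\|\psi\|_{\H_\w^\ell}^2+2M\|\psi\|_{\H_\w^{\ell+1/2}}^2\le(\text{absorbed remainder})$ produces at once the $\sup_t$ bound and the bound on $2M\int_0^t\|\psi(s)\|_{\H_{\w(s)}^{\ell+1/2}}^2\,ds$, the latter being precisely what controls the time integrals of the gain norms on the right-hand side of the Young splitting. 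A bootstrap then propagates the target bounds $\vvvert\phi^\eps\vvvert_{\ell+1,T}^2\le 4\|\phi_0^\eps\|_{\H_{\w_0}^{\ell+1}}^2+\|a_0^\eps\|_{\H_{\w_0}^\ell}^{4\sigma}$ and $\vvvert a^\eps\vvvert_{\ell,T}^2\le 2\|a_0^\eps\|_{\H_{\w_0}^\ell}^2$ from one iterate to the next, uniformly in $n$ and $\eps$. Finally I would obtain convergence and uniqueness by estimating the differences of consecutive iterates in the lower-order norm $X_{\w,T}^{\ell-1}$, where the same algebra arguments give a contraction, and recover $C([0,T],\H_\w^\ell)$ continuity by interpolation against the uniform high bounds. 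I expect the main difficulty to lie in the bookkeeping of the nonlinear estimate: one must verify that, after moving half a derivative, every term leaves at most the available half-derivative gain (separately for $\phi$ and for $a$) on a single factor, the others remaining at base regularity. It is only because the loss concentrates on one factor that the half-derivative gain, rather than a full one, suffices to close the scheme.
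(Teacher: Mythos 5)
Your proposal follows essentially the same route as the paper: the same iteration with coefficients frozen at the previous step (the amplitude equation reduced to a linear Schr\"odinger equation), the same energy identity \eqref{eq:evol-norm} with the skew-adjointness of $\tfrac{i\eps}{2}\d_x^2$ killing the $\eps$-dependence, the same duality splitting $\<D\>^{\ell+1}\<D\>^{\ell+1}=\<D\>^{\ell+3/2}\<D\>^{\ell+1/2}$ combined with the Ginibre--Velo tame product estimate (Lemma~\ref{lem:tame}), and the same absorption of the gain norms by $-2M\|\cdot\|^2_{\H_\w^{\,\cdot+1/2}}$ for $M$ large, propagated by induction to get the uniform bounds \eqref{eq:hyp-rec}. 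The one place where you genuinely diverge is the convergence of the iterates: you propose the classical ``high-norm boundedness, low-norm contraction'' scheme in $X_{\w,T}^{\ell-1}$ followed by interpolation, whereas the paper contracts directly in $X_{\w,T}^{\ell}$ with geometric ratio $2K/M$. The direct contraction is available precisely because the analytic weight supplies the half-derivative gain at the top regularity, so no derivative is lost in the difference estimates; it spares you the interpolation step and, more to the point, avoids a small pitfall of your variant: applying Lemma~\ref{lem:evol-norm} one level down requires the product estimate with $m=s=\ell-3/2>1/2$, i.e.\ $\ell>2$, which is stronger than the stated hypothesis $\ell>1$ (one would then need the refined tame estimate alluded to in the remark after Lemma~\ref{lem:evol-norm}). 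Apart from this, your bookkeeping of where the gain norms land --- on at most two factors, split between the two budgets by Young/Cauchy--Schwarz in time --- matches the paper's use of \eqref{eq:mu1}--\eqref{eq:nu3}, and the origin of the power $\|a_0^\eps\|_{\H_{\w_0}^\ell}^{4\sigma}$ (Young applied to the $\vvvert\phi\vvvert\,\vvvert a\vvvert^{2\sigma}$ term coming from $f$) is correctly identified.
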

 Unlike in the framework of Sobolev spaces, we do not have tame
  estimates in $\H_{\w}^\ell$. This is the reason why the existence
  time in the above result depends a priori on $\ell$. (In the
  Sobolev case, the existence time for hyperbolic systems in $H^s$ does
  not depend on $s>d/2+1$, thanks to tame estimates.) It is natural to
  consider that the map $\ell\mapsto M(\ell)$ is increasing. In other
  words, $T$ in Theorem~\ref{th:wp} is a decreasing function of $\ell$. 

Our second result states the convergence of the phase and of the
complex amplitude as $\eps\to 0$. 
\begin{theorem}\label{th:conv-obs}
Let $\w_0>0$, $\ell>1$,
$(\phi_0,a_0)\in\H_{\w_0}^{\ell+2}\times\H_{\w_0}^{\ell+1} $ and
$(\phi_0^\eps,a_0^\eps)_{\eps\in(0,1]}$ bounded in
$\H_{\w_0}^{\ell+1}\times\H_{\w_0}^{\ell}$ such that 
\begin{equation*}
  r_0^\eps:= \|\phi_0^\eps-
\phi_0\|_{\H_{\w_0}^{\ell+1}} + \|a_0^\eps-
a_0\|_{\H_{\w_0}^{\ell}} \Tend \eps 0 0.
\end{equation*}
Let $M=M(\ell+1)$ and $T=T(\ell+1)$, as defined as in Theorem~\ref{th:wp}. 
 Then there is an
 $\eps$-independent 
 $C>0$ such that for all $\eps\in (0,1]$, 
$$\vvvert\phi^\eps-\phi\vvvert_{\ell+1,T}+\vvvert
a^\eps-a\vvvert_{\ell,T}\le C\(r_0^\eps+\eps\),$$ 
where $(\phi^\eps,a^\eps)$ denotes the solution to
\eqref{eq:syst-grenier}  and
$(\phi,a)$ is the solution to \eqref{eq:syst-limit}. 
\end{theorem}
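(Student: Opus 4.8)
The plan is to treat the limit system \eqref{eq:syst-limit} as the $\eps=0$ instance of \eqref{eq:syst-grenier} and to run a stability estimate for the difference of the two solutions in the functional framework of Theorem~\ref{th:wp}. First I would record existence and uniform bounds of both solutions on a common interval. Since $(\phi_0,a_0)\in\H_{\w_0}^{\ell+2}\times\H_{\w_0}^{\ell+1}$ is data at regularity level $\ell+1$, Theorem~\ref{th:wp} with $\eps=0$ produces $(\phi,a)\in X_{\w,T}^{\ell+1}$ for $M=M(\ell+1)$, $T=T(\ell+1)$, together with the a priori bounds $\vvvert\phi\vvvert_{\ell+2,T}^2\le 4\|\phi_0\|_{\H_{\w_0}^{\ell+2}}^2+\|a_0\|_{\H_{\w_0}^{\ell+1}}^{4\sigma}$ and $\vvvert a\vvvert_{\ell+1,T}^2\le 2\|a_0\|_{\H_{\w_0}^{\ell+1}}^2$. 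For this same (large) $M$ and (small) $T$ the level-$\ell$ energy estimates remain valid, a larger $M$ only strengthening the gain term, so each $(\phi^\eps,a^\eps)$, $\eps\in(0,1]$, lies in $X_{\w,T}^\ell$ with bounds uniform in $\eps$ coming from the boundedness of $(\phi_0^\eps,a_0^\eps)$ in $\H_{\w_0}^{\ell+1}\times\H_{\w_0}^{\ell}$. All nonlinear factors below are thereby controlled by an $\eps$-independent constant $C$.

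Next I would subtract the two systems, writing $\psi:=\phi^\eps-\phi$ and $b:=a^\eps-a$. The phase equation carries no explicit $\eps$, so $\psi$ solves a transport-type equation whose source consists only of the differences $(\d_x\phi^\eps)^2-(\d_x\phi)^2$, $g(|a^\eps|^2)\d_x\phi^\eps-g(|a|^2)\d_x\phi$ and $f(|a^\eps|^2)-f(|a|^2)$. Each factors linearly through $\psi$ and $b$ once the quadratic and polynomial differences are written in factored form, e.g. $g(|a^\eps|^2)-g(|a|^2)=\bigl(\overline{a^\eps}b+\bar b\,a\bigr)\int_0^1 g'\bigl(|a|^2+\theta(|a^\eps|^2-|a|^2)\bigr)d\theta$, and is estimated using that $\H_{\w}^\ell$ is an algebra for $\ell>1/2$ together with the uniform bounds above. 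The amplitude difference $b$ solves the analogous equation, with the extra term $\frac{i\eps}{2}\d_x^2 a^\eps$ on the right-hand side; splitting $a^\eps=a+b$, the self-interaction $\frac{i\eps}{2}\d_x^2 b$ is skew-adjoint in $\H_{\w}^\ell$ (it is a Fourier multiplier commuting with the weight) and hence drops out of $\RE\<b,\d_t b\>_{\H_{\w}^\ell}$, so the only genuine forcing is $\frac{i\eps}{2}\d_x^2 a$.

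The energy estimate then proceeds as for Theorem~\ref{th:wp}, using the evolution identity \eqref{eq:evol-norm} for $\vvvert\psi\vvvert_{\ell+1,t}^2$ and $\vvvert b\vvvert_{\ell,t}^2$: the top-order contributions of the nonlinear differences are absorbed by the gain term $2\dot\w\|\cdot\|_{\H_{\w}^{\ell+1/2}}^2$ (with $\dot\w=-M$) once $M$ is large, and the lower-order pieces produce a harmless $C\int_0^t(\vvvert\psi\vvvert_{\ell+1,s}^2+\vvvert b\vvvert_{\ell,s}^2)\,ds$. The one new ingredient is the forcing $\frac{i\eps}{2}\d_x^2 a$, which I would bound by distributing one half-derivative onto each factor,
\[
\eps\left|\RE\<b,\tfrac12\d_x^2 a\>_{\H_{\w}^\ell}\right|\le C\eps\,\|b\|_{\H_{\w}^{\ell+1/2}}\,\|a\|_{\H_{\w}^{\ell+3/2}},
\]
and then integrate in time and apply Cauchy--Schwarz, controlling the result by $C\eps\,(\int_0^t\|b\|_{\H_{\w}^{\ell+1/2}}^2)^{1/2}(\int_0^t\|a\|_{\H_{\w}^{\ell+3/2}}^2)^{1/2}$. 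By \eqref{eq:triple} the first factor is at most $\vvvert b\vvvert_{\ell,t}/\sqrt{2M}$ and the second at most $\vvvert a\vvvert_{\ell+1,t}/\sqrt{2M}\le C$, so after a Young inequality this term is $\le\frac14\vvvert b\vvvert_{\ell,t}^2+C\eps^2$. Collecting everything, $\Phi(t):=\vvvert\psi\vvvert_{\ell+1,t}^2+\vvvert b\vvvert_{\ell,t}^2$ satisfies $\Phi(t)\le C(r_0^\eps)^2+C\eps^2+C\int_0^t\Phi(s)\,ds$, and Gr\"onwall on $[0,T]$ gives $\Phi(T)\le C(r_0^\eps+\eps)^2$, which is the claimed bound.

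The main obstacle is precisely this forcing $\frac{\eps}{2}\d_x^2 a$, which costs two derivatives and is the reason the limit data is assumed one level smoother than the approximate data, with $M=M(\ell+1)$ and $T=T(\ell+1)$, while the convergence is measured at the lower level $\ell$. Controlling it requires spending the half-derivative parabolic gain (from the decreasing weight $\w$) on $b$ and simultaneously the half-derivative of extra regularity available on $a\in Y_{\w,T}^{\ell+1}\subset L_T^2\H_{\w}^{\ell+3/2}$; it is the skew-adjointness of $i\d_x^2$ in $\H_{\w}^\ell$ that prevents the apparently more dangerous self-interaction $\eps\d_x^2 b$ from entering at all. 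Everything else is a replay of the energy estimates already established for Theorem~\ref{th:wp}.
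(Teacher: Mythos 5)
Your proposal is correct and follows essentially the same route as the paper: subtract the two systems, estimate the difference $(\delta\phi^\eps,\delta a^\eps)$ at level $\ell$ using the parabolic gain from the decreasing weight (i.e.\ the estimates of Lemma~\ref{lem:evol-norm}), and handle the forcing $\frac{i\eps}{2}\d_x^2 a$ exactly as in \eqref{eq:aY}, by splitting a half-derivative onto each factor and using $a\in Y_{\w,T}^{\ell+1}$. The only cosmetic difference is that the paper closes the coupled inequalities by taking $M$ large and adding them, rather than by a final Gr\"onwall step, but the decomposition and the key estimates are identical.
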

The fact that in the above statement, $T(\ell+1) (<T(\ell)$) is considered,
and not simply $T(\ell)$, is reminiscent of the fact that to prove error
estimates in WKB expansions, one has to pay some price in terms of
regularity, even in the linear case when working in Sobolev spaces (see
e.g. \cite[Chapter~1]{CaBook}). 

However,  regarding convergence of the wave functions, the previous
result is not sufficient. Indeed, as fast as $\phi_0^\eps$ and
$a_0^\eps$ may converge as $\eps\to 0$, Theorem \ref{th:conv-obs} at most
guarantees that $\phi^\eps-\phi=\mathcal{O}(\eps)$, which only ensures
that $a^\eps e^{i\phi^\eps/\eps}-ae^{i\phi/\eps}=\mathcal{O}(1)$, due
to the rapid oscillations. However, the above convergence result
suffices to infer the convergence of quadratic observables:
\begin{corollary}\label{cor:quad}
  Under the assumptions of Theorem~\ref{th:conv-obs}, the position and
  momentum densities converge:
  \begin{equation*}
    |u^\eps|^2\Tend \eps 0 |a|^2,\quad \text{and}\quad\IM \(\eps \bar
    u^\eps \d_x 
    u^\eps\) \Tend \eps 0 |a|^2\d_x \phi,\quad\text{in
    }L^\infty([0,T];L^1\cap L^\infty(\R)).
  \end{equation*}
\end{corollary}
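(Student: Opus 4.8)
The goal is to upgrade the convergence of phase and amplitude provided by Theorem~\ref{th:conv-obs} to the convergence of the two quadratic observables. The plan is to write both observables in terms of $(\phi^\eps,a^\eps)$ and $(\phi,a)$, and then to control the differences in $L^\infty([0,T];L^1\cap L^\infty)$ using the single estimate $\vvvert\phi^\eps-\phi\vvvert_{\ell+1,T}+\vvvert a^\eps-a\vvvert_{\ell,T}\le C(r_0^\eps+\eps)$. The essential reduction is that $u^\eps=a^\eps e^{i\phi^\eps/\eps}$, so that $|u^\eps|^2=|a^\eps|^2$ and the rapid oscillations cancel: the momentum density is
\begin{equation*}
  \IM\(\eps\bar u^\eps\d_x u^\eps\)=|a^\eps|^2\d_x\phi^\eps
  +\eps\,\IM\(\bar a^\eps\d_x a^\eps\),
\end{equation*}
which again involves only $a^\eps$ and $\d_x\phi^\eps$ together with an explicit $O(\eps)$ term. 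Thus both observables are \emph{quadratic polynomials} in the good unknowns, with no oscillatory factor surviving.

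First I would establish the algebraic-estimate toolkit tying the $\H_\w^\ell$-norms to the target $L^1\cap L^\infty$ norms. Since $\ell>1$ and $\w\ge 0$, the space $\H_{\w}^\ell$ embeds continuously into $L^\infty$ (indeed into $H^\ell\hookrightarrow C_b$), and products of two $\H_\w^\ell$ functions lie in $L^1$ by Cauchy--Schwarz, with $\|\psi_1\psi_2\|_{L^1}\le\|\psi_1\|_{L^2}\|\psi_2\|_{L^2}\le C\|\psi_1\|_{\H_\w^\ell}\|\psi_2\|_{\H_\w^\ell}$, and in $L^\infty$ by $\|\psi_1\psi_2\|_{L^\infty}\le\|\psi_1\|_{L^\infty}\|\psi_2\|_{L^\infty}\le C\|\psi_1\|_{\H_\w^\ell}\|\psi_2\|_{\H_\w^\ell}$. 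The triple-norm bound \eqref{eq:triple} controls $\sup_{0\le t\le T}\|\cdot\|_{\H_{\w(t)}^\ell}$, so from Theorem~\ref{th:wp} the solutions $(\phi^\eps,a^\eps)$ and $(\phi,a)$ are bounded in $L^\infty([0,T];\H_\w^{\ell+1})\times L^\infty([0,T];\H_\w^\ell)$ uniformly in $\eps$.

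Next I would treat each observable by a bilinear-difference identity of the form $BC-B'C'=(B-B')C+B'(C-C')$. For the position density I write $|a^\eps|^2-|a|^2=\RE\((a^\eps-a)\overline{a^\eps}\)+\RE\(a\,(\overline{a^\eps-a})\)$, so that the $L^1\cap L^\infty$ norm is bounded by $C\(\|a^\eps\|_{\H_\w^\ell}+\|a\|_{\H_\w^\ell}\)\|a^\eps-a\|_{\H_\w^\ell}\le C\vvvert a^\eps-a\vvvert_{\ell,T}$ uniformly in time, using the uniform bounds above. For the momentum density I split
\begin{equation*}
  |a^\eps|^2\d_x\phi^\eps-|a|^2\d_x\phi
  =\(|a^\eps|^2-|a|^2\)\d_x\phi^\eps+|a|^2\(\d_x\phi^\eps-\d_x\phi\),
\end{equation*}
and bound each term by the same bilinear estimate, noting that $\d_x\phi^\eps$ and $\d_x\phi$ are controlled in $\H_\w^{\ell}$ since $\phi^\eps,\phi\in\H_\w^{\ell+1}$, so that $\|\d_x\phi^\eps-\d_x\phi\|_{\H_\w^{\ell}}\le\|\phi^\eps-\phi\|_{\H_\w^{\ell+1}}\le\vvvert\phi^\eps-\phi\vvvert_{\ell+1,T}$. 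The remaining term $\eps\,\IM\(\bar a^\eps\d_x a^\eps\)$ contributes at most $C\eps\|a^\eps\|_{\H_\w^\ell}\|a^\eps\|_{\H_\w^{\ell+1/2}}$, which is $O(\eps)$ after integrating in time against \eqref{eq:triple}. Collecting these and invoking Theorem~\ref{th:conv-obs} gives the stated convergence with rate $O(r_0^\eps+\eps)$.

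The only point requiring mild care — rather than a genuine obstacle — is that the momentum observable is not purely an algebraic product of the good unknowns but also carries the factor $\d_x$, so one must verify the product estimates remain valid after losing one derivative on the phase; this is exactly why the functional framework places $\phi$ in $Y_{\w,T}^{\ell+1}$ while $a$ sits in $Y_{\w,T}^{\ell}$, and no additional regularity is lost. No oscillatory cancellation beyond the elementary $|e^{i\phi^\eps/\eps}|=1$ is needed, so unlike the wave function itself the observables converge without any restriction on the rate of convergence of the phase.
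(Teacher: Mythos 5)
Your argument is correct and follows essentially the same route as the paper's proof: rewrite both observables in terms of $(a^\eps,\d_x\phi^\eps)$ so that the oscillations cancel, use the bilinear splitting $BC-B'C'=(B-B')C+B'(C-C')$ together with Cauchy--Schwarz for the $L^1$ bounds and the embedding $\H_\w^\ell\hookrightarrow L^\infty$ for the $L^\infty$ bounds, and control everything by Theorems~\ref{th:wp} and~\ref{th:conv-obs}. The only cosmetic slip is your remark that $\eps\IM(\bar a^\eps\d_x a^\eps)$ is handled ``after integrating in time'': since the conclusion is in $L^\infty([0,T];L^1\cap L^\infty)$ you need a pointwise-in-time bound, which you get for free as $\eps\|a^\eps\|^2_{L^\infty_T H^1}$ (or, for the $L^\infty$ version, via the embedding of $\H_{\w(t)}^\ell$ with $\w(t)\ge\w_0-MT>0$ into every $H^s$), exactly as in the paper.
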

 In
order to get a good approximation of the wave function $a^\eps
e^{i\phi^\eps/\eps}$, we have to approximate $\phi^\eps$ up to an
 error which is small compared to $\eps$. It will be done by adding a
 corrective 
term to $(\phi,a)$. For this purpose, we consider the system
obtained by linearizing \eqref{eq:syst-grenier} about
$(\phi,a)$, solution to \eqref{eq:syst-limit},
\begin{equation}\label{eq:syst-grenier-linearized}
  \left\{
    \begin{aligned}
      &\d_t \phi_1 + \left(\partial_x
        \phi+\frac{1}{2}g\(|a|^2\)\right)\partial_x \phi_1 \\
&\qquad + \left(
        g'\(|a|^2\)\partial_x \phi
        +2f'(|a|^2)\right)\RE\left(\overline{a}a_1\right)=0 ,\quad 
      {\phi_1}_{\mid t=0}=\phi_{10},\\ 
& \d_t a_1  + \d_x \phi\d_x a_1+\frac{1}{2}a_1\d_x^2 \phi + \d_x a\d_x \phi_1 
+\frac{1}{2}a\d_x^2 \phi_1  
+\frac{1}{2}\d_x\( g\(|a|^2\)a_1\)\\
&\qquad+\frac{1}{2}\d_x\(2a g'\(|a|^2\)\RE\left(\overline{a}a_1\right)\) =
\frac{i}{2}\d_x^2 a,\qquad{a_1}_{\mid t=0}=a_{10}.
    \end{aligned}
\right.
\end{equation}
We refer to \cite{CaBook} for a discussion on the appearance of these
correctors, and in particular regarding cases where they are trivial
or not. 
Provided $(\phi_0,a_0)\in \H_{\w_0}^{\ell+3}\times\H_{\w_0}^{\ell+2}$
(which implies $(\phi,a)\in X_{\w,T}^{\ell+2}$ according to
Theorem \ref{th:wp}) and $(\phi_{10},a_{10})\in
\H_{\w_0}^{\ell+2}\times\H_{\w_0}^{\ell+1}$, we will see that the solution to
(\ref{eq:syst-grenier-linearized}) belongs to $X_{\w,T}^{\ell+1}$, and
our final result is the following. 
 
\begin{theorem}
\label{th:conv-wf}
Let $\w_0>0$, $\ell>1$,
$(\phi_0,a_0)\in\H_{\w_0}^{\ell+3}\times\H_{\w_0}^{\ell+2}$,
$(\phi_{10},a_{10})\in\H_{\w_0}^{\ell+2}\times\H_{\w_0}^{\ell+1} $ and
$(\phi_0^\eps,a_0^\eps)_{\eps\in(0,1]}$ bounded in
$\H_{\w_0}^{\ell+1}\times\H_{\w_0}^{\ell}$ such that 
\begin{equation*}
  r_1^\eps:=\|\phi_0^\eps- \phi_0-\eps\phi_{10}\|_{\H_{\w_0}^{\ell+1}}
  + \|a_0^\eps- a_0-\eps a_{10}\|_{\H_{\w_0}^{\ell}}=o(\eps)\text{ as
  } \eps \to 0.
\end{equation*}
Then, for $M=M(\ell+2)$ and $T=T(\ell+2)$  as in Theorem~\ref{th:wp},
there is 
an $\eps$-independent  $C>0$ such that for all $\eps\in (0,1]$, 
\begin{equation}\label{eq:est2}
\vvvert\phi^\eps-\phi-\eps \phi_1\vvvert_{\ell+1,T}+\vvvert
a^\eps-a-\eps a_1\vvvert_{\ell,T}\le C\(r_1^\eps +\eps^2\),
\end{equation}
where $(\phi^\eps,a^\eps)$ denotes the solution to
\eqref{eq:syst-grenier}, $(\phi,a)$
is the solution to \eqref{eq:syst-limit}, and $(\phi_1,a_1)$ is the solution to
\eqref{eq:syst-grenier-linearized}. In particular,
\begin{equation*}
  \left\|u^\eps - a
    e^{i\phi_1}e^{i\phi/\eps}\right\|_{L^\infty([0,T];L^2\cap
    L^\infty(\R))}=\O\(\frac{r_1^\eps}{\eps}+\eps\)\Tend \eps 0 0 . 
\end{equation*}
\end{theorem}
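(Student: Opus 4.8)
The plan is to regard $\Phi^{\mathrm{app}}:=(\phi+\eps\phi_1,\ a+\eps a_1)$ as an approximate solution of Grenier's system \eqref{eq:syst-grenier} and to estimate the remainder
\[
R^\eps:=(\theta^\eps,b^\eps):=\(\phi^\eps-\phi-\eps\phi_1,\ a^\eps-a-\eps a_1\)
\]
in the norm of $X_{\w,T}^\ell$, by the energy method already used to prove Theorem~\ref{th:wp}. First I would substitute $\Phi^{\mathrm{app}}$ into \eqref{eq:syst-grenier} and collect powers of $\eps$. The terms of order $\eps^0$ cancel because $(\phi,a)$ solves \eqref{eq:syst-limit}; the terms of order $\eps^1$ cancel because $(\phi_1,a_1)$ solves \eqref{eq:syst-grenier-linearized} — in particular the corrector's right-hand side $\frac i2\d_x^2 a$ is exactly what absorbs the leading quantum pressure $\frac{i\eps}2\d_x^2 a$ produced by the amplitude equation. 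What is left is an $\O(\eps^2)$ residual $\eps^2\mathcal S^\eps=\eps^2(\mathcal S_\phi^\eps,\mathcal S_a^\eps)$, whose components are $\frac12(\d_x\phi_1)^2$ in the phase slot, the second-order (in $\eps$) Taylor remainders of $g(|a+\eps a_1|^2)$ and $f(|a+\eps a_1|^2)$, and the single genuinely dangerous term $\frac i2\d_x^2 a_1$ in the amplitude slot (treated below). All but the last lie in the natural spaces because $(\phi,a)\in X_{\w,T}^{\ell+2}$ and $(\phi_1,a_1)\in X_{\w,T}^{\ell+1}$.

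Subtracting the three systems yields an equation
\[
\d_t R^\eps = L^\eps R^\eps + \mathcal Q^\eps(R^\eps) + \eps^2\mathcal S^\eps,
\]
where $L^\eps$ is the linearization of \eqref{eq:syst-grenier} about $\Phi^{\mathrm{app}}$ (the same skew-symmetric perturbation of a symmetrizable system as \eqref{eq:syst-grenier-linearized}, but with $\frac{i\eps}2\d_x^2$ in place of $\frac i2\d_x^2$), and $\mathcal Q^\eps(R^\eps)$ gathers the finitely many terms at least quadratic in $R^\eps$, which are polynomial in $R^\eps$ since $f,g$ are polynomials. I would then run \eqref{eq:evol-norm} on $\vvvert\theta^\eps\vvvert_{\ell+1,t}$ and $\vvvert b^\eps\vvvert_{\ell,t}$. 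The contribution of $L^\eps$ is controlled exactly as in Theorem~\ref{th:wp}: the transport and reaction terms are bounded using the uniform $X_{\w,T}^{\ell+2}$ bound on $\Phi^{\mathrm{app}}$, the derivative-losing terms are absorbed by the gain $2\dot\w\,\|\cdot\|_{\H_\w^{\ell+1/2}}^2$ once $M=M(\ell+2)$ is large, and the dispersive term is harmless since $\RE\<b^\eps,\tfrac{i\eps}2\d_x^2 b^\eps\>_{\H_\w^\ell}=0$ by skew-adjointness. The term $\RE\<R^\eps,\mathcal Q^\eps(R^\eps)\>$ is a priori only of higher order in $\vvvert R^\eps\vvvert$, but since $R^\eps$ is the difference of two families uniformly bounded in $X_{\w,T}^\ell$ (Theorem~\ref{th:wp} for $\Phi^\eps$ on $[0,T(\ell+2)]\subset[0,T(\ell)]$, and $\Phi^{\mathrm{app}}$ by construction), one factor is bounded by an $\eps$-independent constant and the term folds into the linear Gronwall term; no separate continuity argument is needed.

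The main obstacle is the residual term $\frac{\eps^2}2\d_x^2 a_1$: as $a_1$ only belongs to $Y_{\w,T}^{\ell+1}$, its second derivative is not in $\H_\w^\ell$, so it cannot be paired with $b^\eps$ in $\H_\w^\ell$ directly. Here the gain of regularity is essential: one estimates
\[
2\RE\<b^\eps,\tfrac{i\eps^2}2\d_x^2 a_1\>_{\H_\w^\ell}\le \delta\,\|b^\eps\|_{\H_\w^{\ell+1/2}}^2+C_\delta\,\eps^4\,\|a_1\|_{\H_\w^{\ell+3/2}}^2,
\]
absorbs the first term into the gain (for $M$ large) and integrates the second in time using $a_1\in L^2_T\H_\w^{\ell+3/2}$, which is finite precisely because $(\phi_1,a_1)\in X_{\w,T}^{\ell+1}$. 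This is what forces the budget $(\phi_0,a_0)\in\H_{\w_0}^{\ell+3}\times\H_{\w_0}^{\ell+2}$, $(\phi_{10},a_{10})\in\H_{\w_0}^{\ell+2}\times\H_{\w_0}^{\ell+1}$ and the use of $M(\ell+2),T(\ell+2)$. The phase residual and the Taylor remainders are handled by plain Cauchy--Schwarz in $\H_\w^{\ell+1}$, resp.\ $\H_\w^\ell$, each contributing $\O(\eps^4)$ after integration. Writing $N(t)^2:=\vvvert\theta^\eps\vvvert_{\ell+1,t}^2+\vvvert b^\eps\vvvert_{\ell,t}^2$, this gives $\frac{d}{dt}N^2\le C N^2+C\eps^4$ with $N(0)\le C r_1^\eps$, whence $N(T)\le C(r_1^\eps+\eps^2)$ by Gronwall, i.e.\ \eqref{eq:est2}.

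Finally, for the wave function I would use $\phi^\eps/\eps=\phi/\eps+\phi_1+\theta^\eps/\eps$ and factor
\[
u^\eps-a\,e^{i\phi_1}e^{i\phi/\eps}=e^{i\phi/\eps}e^{i\phi_1}\(a^\eps e^{i\theta^\eps/\eps}-a\),\qquad a^\eps e^{i\theta^\eps/\eps}-a=(\eps a_1+b^\eps)e^{i\theta^\eps/\eps}+a\(e^{i\theta^\eps/\eps}-1\).
\]
Since $\ell>1$ yields $\H_\w^\ell\hookrightarrow L^2\cap L^\infty$, the factors $a,a_1,e^{i\phi_1}$ are uniformly bounded there; moreover $\|\theta^\eps\|_{L^2\cap L^\infty}\lesssim\vvvert\theta^\eps\vvvert_{\ell+1,T}\lesssim r_1^\eps+\eps^2$, and $r_1^\eps=o(\eps)$ makes $\|\theta^\eps\|_{L^\infty}/\eps$ small, so $|e^{i\theta^\eps/\eps}-1|\lesssim|\theta^\eps|/\eps$ for $\eps$ small. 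Hence $\|(\eps a_1+b^\eps)e^{i\theta^\eps/\eps}\|_{L^2\cap L^\infty}=\O(\eps+r_1^\eps)$ and $\|a(e^{i\theta^\eps/\eps}-1)\|_{L^2\cap L^\infty}=\O(r_1^\eps/\eps+\eps)$, which gives the announced $\O(r_1^\eps/\eps+\eps)$ bound, tending to $0$.
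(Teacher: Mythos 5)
Your proposal is correct and follows essentially the same route as the paper: define the remainder $(\delta\phi_1^\eps,\delta a_1^\eps)=(\phi^\eps-\phi-\eps\phi_1,\,a^\eps-a-\eps a_1)$, expand the nonlinearities by a second-order Taylor formula so that the residual is $\O(\eps^2)$ (using Theorem~\ref{th:wp} for the $\O(1)$ bounds and Theorem~\ref{th:conv-obs} for $\delta\phi^\eps,\delta a^\eps=\O(\eps)$), close the coupled energy estimate in the weighted analytic spaces with the term $\frac{i\eps^2}{2}\d_x^2 a_1$ absorbed by the regularity gain coming from $\dot\w<0$ (this is precisely the $\theta_2$ term in \eqref{eq:aY}, and is what dictates the regularity budget and the choice $M(\ell+2)$, $T(\ell+2)$), and finish with the pointwise factorization of $u^\eps-ae^{i\phi_1}e^{i\phi/\eps}$. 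The only cosmetic differences are that the paper closes the estimate by absorbing the $\frac{C}{M}\vvvert\cdot\vvvert^2$ terms for $M$ large and adding the two resulting inequalities rather than invoking Gronwall, and bounds the wave-function error directly by $|\delta a^\eps|+\eps^{-1}|a^\eps||\delta\phi_1^\eps|$.
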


\section{Well-posedness}\label{sec:cauchy}
In this section,  $\eps\in [0,1]$ is fixed. Solutions to (\ref{eq:syst-grenier}) are
constructed  as limits of the solutions of the iterative scheme  
\begin{equation}\label{eq:iteration}
  \left\{
    \begin{aligned}
      &\d_t \phi^\eps_{j+1} + \frac{1}{2}\d_x\phi^\eps_j
      \d_x\phi^\eps_{j+1}+\frac{1}{2}g(|a_j^\eps|^2)\d_x\phi^\eps_{j+1} +f(|a_j^\eps|^2) =0,\\  
&      \qquad\qquad \phi^\eps_{j+1\mid t=0}=\phi_0^\eps,\\
& \d_t a^\eps_{j+1} + \d_x\phi^\eps_j  \d_x a^\eps_{j+1}
+\frac{1}{2}a^\eps_{j+1}\d_x^2 \phi^\eps_j +\frac{1}{2}\d_x
\(g\(|a_j^\eps|^2\)\)a^\eps_{j+1} \\ 
&\quad+\frac{1}{2}h\(|a^\eps_j|^2\)\overline{a_j^\eps} a^\eps_{j+1}\d_x a_j^\eps=
\frac{i\eps}{2}\d^2_x a^\eps_{j+1}, \quad a^\eps_{j+1\mid t=0}=a_0^\eps,
\end{aligned}
\right.
\end{equation}
where $h(s)=g(s)/s$. The scheme is initialized with time-independent
$(\phi_0^\eps,a_0^\eps)\in
\H_{\w_0}^{\ell+1}\times\H_{\w_0}^{\ell}\subset X_{\w,T}^\ell$ for any
$T>0$. 
\smallbreak

The scheme is well-defined: if $\ell>1$, for a given $(\phi_j^\eps,a_j^\eps)\in X_{\w,T}^\ell$, (\ref{eq:iteration}) defines $(\phi_{j+1}^\eps,a_{j+1}^\eps)$.
Indeed, in the first equation, $\phi_{j+1}^\eps$
solves a linear transport equation with smooth coefficients, and the 
second equation is equivalent through the relation $v_{j+1}^\eps = a_{j+1}^\eps
e^{i\phi_j^\eps/\eps}$ to the linear Schr\"odinger equation
\begin{align*}
&  i\eps\d_t v_{j+1}^\eps +\frac{\eps^2}{2}\d_x^2 v_{j+1}^\eps\\ 
 &\quad =
  -\(\d_t \phi_j^\eps +\frac{1}{2}(\d_x
  \phi_j^\eps)^2+\frac{i\eps}{2}\d_x \(g\(|a_j^\eps|^2\)\) +\frac{i\eps}{2}h(|a^\eps_j|^2)\overline{a_j^\eps}\d_x a_j^\eps\)v_{j+1}^\eps
\end{align*}
with initial condition
$$\quad v_{j+1\mid t=0}^\eps= a_0^\eps e^{i\phi_0^\eps/\eps}.$$
This is a linear Schr\"odinger equation with a
smooth and bounded external time-dependent potential, for which the
existence of an $L^2$-solution is granted, by perturbative arguments
(the potential is complex-valued). 
\smallbreak

We recall the following lemma, which is proved in \cite{GV01}.
\begin{lemma}\label{lem:tame} Let $m\ge 0$
and $s>1/2$. Then, there is a constant $C>0$, independent of
$\w\ge 0$, such that for every $\psi_1,\psi_2\in \H_\w^{\max(m,s)}$,
\begin{equation}\label{eq:tame}
    \|\psi_1 \psi_2\|_{\H_\w^m}\le C\(
    \|\psi_1\|_{\H_\w^m}\|\psi_2\|_{\H_\w^s} +
    \|\psi_1\|_{\H_\w^s}\|\psi_2\|_{\H_\w^m}\).  
  \end{equation}
\end{lemma}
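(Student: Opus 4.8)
The plan is to work on the Fourier side, where the product $\psi_1\psi_2$ becomes a convolution, and to distribute both the analytic weight $e^{\w\<\xi\>}$ and the polynomial weight $\<\xi\>^m$ onto the two factors using subadditivity of $\<\cdot\>$. Writing $\widehat{\psi_1\psi_2}(\xi)=\frac{1}{\sqrt{2\pi}}\int_\R\hat\psi_1(\xi-\eta)\hat\psi_2(\eta)\,d\eta$, the starting observation is the elementary inequality
\[\<\xi\>\le\<\xi-\eta\>+\<\eta\>,\qquad\xi,\eta\in\R,\]
which follows from $\sqrt{1+(a+b)^2}\le\sqrt{1+a^2}+\sqrt{1+b^2}$. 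Because $\w\ge0$, this gives at once $e^{\w\<\xi\>}\le e^{\w\<\xi-\eta\>}e^{\w\<\eta\>}$, so the analytic weight factorizes through the convolution. This is the step responsible for the constant in \eqref{eq:tame} being independent of $\w$, and it is really the only ingredient specific to the present weighted framework.

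For the polynomial weight I would combine the same triangle inequality with $(x+y)^m\le2^m(x^m+y^m)$ (valid for $m\ge0$, $x,y\ge0$) to get $\<\xi\>^m\le2^m\(\<\xi-\eta\>^m+\<\eta\>^m\)$. Setting $F_i(\xi)=\<\xi\>^m e^{\w\<\xi\>}|\hat\psi_i(\xi)|$ and $g_i(\xi)=e^{\w\<\xi\>}|\hat\psi_i(\xi)|$, so that $\|F_i\|_{L^2(\R)}=\|\psi_i\|_{\H_\w^m}$, inserting absolute values then yields the pointwise bound
\[\<\xi\>^m e^{\w\<\xi\>}\big|\widehat{\psi_1\psi_2}(\xi)\big|\le\frac{2^m}{\sqrt{2\pi}}\int_\R\Big(F_1(\xi-\eta)g_2(\eta)+g_1(\xi-\eta)F_2(\eta)\Big)\,d\eta.\]
The right-hand side is a sum of two convolutions, each of which I would estimate in $L^2(\R)$ by Young's inequality in the form $\|F*g\|_{L^2}\le\|F\|_{L^2}\|g\|_{L^1}$; taking the $L^2(\R)$ norm in $\xi$ and recalling $\|F_i\|_{L^2}=\|\psi_i\|_{\H_\w^m}$ reduces the whole estimate to controlling $\|g_1\|_{L^1}$ and $\|g_2\|_{L^1}$.

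This last control is where the hypothesis $s>1/2$ enters. By Cauchy--Schwarz,
\[\|g_i\|_{L^1}=\int_\R\<\eta\>^{-s}\,\<\eta\>^{s}e^{\w\<\eta\>}|\hat\psi_i(\eta)|\,d\eta\le\(\int_\R\<\eta\>^{-2s}\,d\eta\)^{1/2}\|\psi_i\|_{\H_\w^s},\]
and the first factor is finite precisely because $2s>1$. Combining this with the two Young estimates gives $\|F_1*g_2\|_{L^2}\le C\|\psi_1\|_{\H_\w^m}\|\psi_2\|_{\H_\w^s}$ and, symmetrically, $\|g_1*F_2\|_{L^2}\le C\|\psi_1\|_{\H_\w^s}\|\psi_2\|_{\H_\w^m}$; adding them yields \eqref{eq:tame} with a constant independent of $\w$. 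I expect no serious obstacle beyond the weight factorization: once $e^{\w\<\xi\>}$ is split, the argument is the standard Sobolev tame product estimate transplanted to the weighted space, and the threshold $s>1/2$ is forced by the integrability of $\<\eta\>^{-2s}$ on $\R$.
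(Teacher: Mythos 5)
Your proof is correct, and it is essentially the argument behind the lemma as proved in the cited reference \cite{GV01}: the paper itself does not reproduce a proof but simply defers to Ginibre--Velo, whose proof rests on exactly the ingredients you use (the convolution structure on the Fourier side, the subadditivity $\<\xi\>\le\<\xi-\eta\>+\<\eta\>$ which makes the exponential weight factorize for $\w\ge 0$, the splitting $\<\xi\>^m\lesssim\<\xi-\eta\>^m+\<\eta\>^m$, Young's inequality, and Cauchy--Schwarz with $2s>1$). Nothing is missing, and your identification of the weight factorization as the only $\w$-specific step, with the constant therefore independent of $\w$, is exactly the point of the lemma.
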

The following lemma is a toolbox for all the forthcoming analysis.

\begin{lemma}\label{lem:evol-norm}
 Let  $\ell>1$ and $T>0$. Let $(\phi,a)\in X_{\w,T}^\ell$,
 $\tilde{a}\in Y_{\w,T}^{\ell+1}$ and $(\FF,\GG)\in
 L^2([0,T],\H_\w^{\ell+1/2}\times \H_\w^{\ell-1/2})$ such that 
\begin{align}
&\d_t\phi=\FF,& \phi(0)\in\H_{\w_0}^{\ell+1},\label{eq:phi}\\
&\d_t a=\GG+i\theta_1\d_x^2 a+i\theta_2\d_x^2\tilde{a},& a(0)\in\H_{\w_0}^{\ell},\label{eq:a}
\end{align}    
where $\theta_1,\theta_2\in\R$. Then
\begin{align}
&\vvvert\phi\vvvert_{\ell+1,T}^2  \le
  \|\phi(0)\|_{\H_{\w_0}^{\ell+1}}^2+\frac{1}{M}\vvvert\phi\vvvert_{\ell+1,T}\sqrt{2M}\|\FF\|_{L^2_T\H_{\w}^{\ell+1/2}},\label{eq:phiY}\\ 
&\vvvert a\vvvert_{\ell,T}^2  \le
  \|a(0)\|_{\H_{\w_0}^{\ell}}^2+\frac{1}{M}\vvvert
  a\vvvert_{\ell,T}\sqrt{2M}\|\GG\|_{L^2_T\H_{\w}^{\ell-1/2}}+
\frac{|\theta_2|}{2M}\vvvert 
  a\vvvert_{\ell,T}\vvvert \tilde{a}\vvvert_{\ell+1,T}.\label{eq:aY} 
\end{align}
Moreover, there exists $C>0$ (that depends only on $\ell$) such that
\begin{itemize}
\item If $\FF=\d_x\psi_1\d_x\psi_2$ with $\psi_1,\psi_2\in
  Y_{\ell+1,T}$, then 
\begin{equation}\label{eq:mu1} 
\sqrt{2M}\|\FF\|_{L^2_T\H_{\w}^{\ell+1/2}}\le C\vvvert
\psi_1\vvvert_{\ell+1,T}\vvvert \psi_2\vvvert_{\ell+1,T} .
\end{equation}
\item If
  $\FF=\left(\overset{2n}{\underset{j=1}{\prod}}b_j\right)\d_x\psi$
  with $\psi\in Y_{\ell+1,T}$ and $b_j\in Y_{\ell,T}$ for all $j$,
  then 
\begin{equation}\label{eq:mu2}
\sqrt{2M}\|\FF\|_{L^2_T\H_{\w}^{\ell+1/2}}\le C\left(\overset{2n}{\underset{j=1}{\prod}} \vvvert b_j\vvvert_{\ell,T}\right) \vvvert\psi\vvvert_{\ell+1,T}.
\end{equation}
\item If $\FF=\overset{2n}{\underset{j=1}{\prod}}b_j$ with $b_j\in Y_{\ell,T}$ for all $j$, then
\begin{equation}\label{eq:mu3}
\sqrt{2M}\|\FF\|_{L^2_T\H_{\w}^{\ell+1/2}}\le C\overset{2n}{\underset{j=1}{\prod}} \vvvert b_j\vvvert_{\ell,T}.
\end{equation}
\item If $\GG=\d_x\psi\d_x b$ with $\psi\in Y_{\ell+1,T}$ and $b\in Y_{\ell,T}$, then
\begin{equation}\label{eq:nu1}
\sqrt{2M}\|\GG\|_{L^2_T\H_{\w}^{\ell-1/2}}\le C\vvvert \psi\vvvert_{\ell+1,T}\vvvert b\vvvert_{\ell,T}.
\end{equation}
\item If $\GG=b\d_x^2\psi $ with $\psi\in Y_{\ell+1,T}$ and $b\in Y_{\ell,T}$, then
\begin{equation}\label{eq:nu2}
\sqrt{2M}\|\GG\|_{L^2_T\H_{\w}^{\ell-1/2}}\le C\vvvert \psi\vvvert_{\ell+1,T}\vvvert b\vvvert_{\ell,T}.
\end{equation}
\item If $\GG=\left(\overset{2n}{\underset{j=1}{\prod}}b_j\right)\d_x b$ with $b, b_j\in Y_{\ell,T}$ for all $j$, then
\begin{equation}\label{eq:nu3}
\sqrt{2M}\|\GG\|_{L^2_T\H_{\w}^{\ell-1/2}}\le C\left(\overset{2n}{\underset{j=1}{\prod}} \vvvert b_j\vvvert_{\ell,T}\right) \vvvert b\vvvert_{\ell,T}.
\end{equation}
\end{itemize}
\end{lemma}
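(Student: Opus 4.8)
The plan is to derive the six product estimates from the tame estimate of Lemma~\ref{lem:tame} and the two energy estimates from the norm‑evolution identity \eqref{eq:evol-norm}, exploiting throughout that the half‑derivative appearing in the ``gain'' norms $\H_\w^{\ell+1/2}$, $\H_\w^{\ell+3/2}$ is precisely what the second entry of the maximum in the triple norm \eqref{eq:triple} controls, in $L^2$ in time, through the weight $\sqrt{2M}$.

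First I would prove the energy estimates. For $\phi$, apply \eqref{eq:evol-norm} at regularity $\ell+1$: since $\dot\w=-M$ and $\d_t\phi=\FF$,
\[
\frac{d}{dt}\|\phi\|_{\H_\w^{\ell+1}}^2 = 2\RE\<\phi,\FF\>_{\H_\w^{\ell+1}} - 2M\|\phi\|_{\H_\w^{\ell+3/2}}^2 .
\]
Integrating in time, splitting the weight as $\<\xi\>^{2(\ell+1)}=\<\xi\>^{\ell+1/2}\<\xi\>^{\ell+3/2}$ so that $|\<\phi,\FF\>_{\H_\w^{\ell+1}}|\le\|\FF\|_{\H_\w^{\ell+1/2}}\|\phi\|_{\H_\w^{\ell+3/2}}$, and using Cauchy--Schwarz in time together with $\big(2M\int_0^t\|\phi\|_{\H_\w^{\ell+3/2}}^2\big)^{1/2}\le\vvvert\phi\vvvert_{\ell+1,T}$, one obtains \eqref{eq:phiY} after taking the maximum of the two resulting left‑hand contributions. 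For $a$ the same computation at regularity $\ell$ produces, besides the $\GG$‑term, the contributions $\theta_1\RE\<a,i\d_x^2 a\>_{\H_\w^\ell}$ and $\theta_2\RE\<a,i\d_x^2\tilde a\>_{\H_\w^\ell}$. The first vanishes identically, because in Fourier $i\d_x^2$ is multiplication by the purely imaginary symbol $-i\xi^2$, so $\<a,i\d_x^2 a\>_{\H_\w^\ell}$ is imaginary; this skew‑adjointness is the structural reason why the (formally unbounded) dispersive self‑interaction drops out. The cross term is controlled by $|\<a,i\d_x^2\tilde a\>_{\H_\w^\ell}|\le\|a\|_{\H_\w^{\ell+1/2}}\|\tilde a\|_{\H_\w^{\ell+3/2}}$ (from $\xi^2\le\<\xi\>^2$ and $\<\xi\>^{2\ell+2}=\<\xi\>^{\ell+1/2}\<\xi\>^{\ell+3/2}$), which Cauchy--Schwarz in time turns into a bound of the form displayed in \eqref{eq:aY}.

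For the product estimates \eqref{eq:mu1}--\eqref{eq:nu3} I would use a single template: apply Lemma~\ref{lem:tame} pointwise in time — at index $s=\ell$ for the three $\FF$‑estimates (target $\H_\w^{\ell+1/2}$) and at index $s=\ell-1/2$ for the three $\GG$‑estimates (target $\H_\w^{\ell-1/2}$), both admissible since $\ell>1$ — combined with $\|\d_x^k\psi\|_{\H_\w^m}\le\|\psi\|_{\H_\w^{m+k}}$. The guiding principle is to assign the target‑level (``gain'') norm to exactly one factor and the sup‑level norm $\H_\w^\ell$ to all the others: the distinguished factor, measured at $\H_\w^{\ell+3/2}$ or $\H_\w^{\ell\pm1/2}$, is integrated in time and absorbed through $\sqrt{2M}\|\psi\|_{L^2_T\H_\w^{\ell+3/2}}\le\vvvert\psi\vvvert_{\ell+1,T}$, while the remaining factors are bounded by $\sup_s\|b_j\|_{\H_\w^\ell}\le\vvvert b_j\vvvert_{\ell,T}$. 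In \eqref{eq:mu1}, for instance, Lemma~\ref{lem:tame} gives $\|\d_x\psi_1\d_x\psi_2\|_{\H_\w^{\ell+1/2}}\le C\big(\|\psi_1\|_{\H_\w^{\ell+3/2}}\|\psi_2\|_{\H_\w^{\ell+1}}+\|\psi_1\|_{\H_\w^{\ell+1}}\|\psi_2\|_{\H_\w^{\ell+3/2}}\big)$, and the announced bound follows at once. The multi‑factor estimates \eqref{eq:mu2}, \eqref{eq:mu3}, \eqref{eq:nu3} are treated identically, the $2n$ factors $b_j$ being handled by iterating Lemma~\ref{lem:tame} (which, with $m=s=\ell$, makes $\H_\w^\ell$ an algebra) so that they contribute only the sup‑level product $\prod_j\vvvert b_j\vvvert_{\ell,T}$.

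The one step requiring care — the main obstacle — is the index bookkeeping in the $\GG$‑estimates \eqref{eq:nu1}, \eqref{eq:nu2}, \eqref{eq:nu3}, where the distinguished factor carries two derivatives (as in $\d_x^2\psi$) or one derivative on a factor of the lower class $Y_{\w,T}^\ell$ (as in $\d_x b$). One must \emph{not} apply Lemma~\ref{lem:tame} at $s=\ell$, since that would require $\d_x^2\psi\in\H_\w^\ell$, i.e.\ $\psi\in\H_\w^{\ell+2}$, which is not available from $\psi\in Y_{\w,T}^{\ell+1}$. The remedy, and exactly where $\ell>1$ enters, is to use $s=\ell-1/2>1/2$: e.g.\ for \eqref{eq:nu2},
\[
\|b\,\d_x^2\psi\|_{\H_\w^{\ell-1/2}}\le C\|b\|_{\H_\w^{\ell-1/2}}\|\d_x^2\psi\|_{\H_\w^{\ell-1/2}}\le C\|b\|_{\H_\w^{\ell}}\|\psi\|_{\H_\w^{\ell+3/2}},
\]
so that the two derivatives on $\psi$ land precisely in the gain norm $\H_\w^{\ell+3/2}$ controlled by $\vvvert\psi\vvvert_{\ell+1,T}$ in $L^2$ in time. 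Once this choice of index is fixed, the remaining manipulations are routine applications of Cauchy--Schwarz and of the definition \eqref{eq:triple}.
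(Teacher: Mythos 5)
Your proposal is correct and follows essentially the same route as the paper: the energy estimates \eqref{eq:phiY}--\eqref{eq:aY} via \eqref{eq:evol-norm}, duality between $\H_\w^{\ell+1/2}$ and $\H_\w^{\ell+3/2}$, Cauchy--Schwarz in time and the skew-adjointness of $i\theta_1\d_x^2$, and the product estimates via Lemma~\ref{lem:tame} with $m=\ell+1/2$, $s=\ell$ for $\FF$ and $m=s=\ell-1/2>1/2$ for $\GG$ (which is exactly where $\ell>1$ is used in the paper as well). Your bookkeeping of which factor receives the gain norm matches the paper's argument.
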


\begin{remark}
In the proof given below, the assumption $\ell>1$ is used only to
establish the last three estimates on $\GG$. The rest of the proof only
requires the assumption $\ell>1/2$. Actually, even the estimates on
$\GG$ can be proved under the condition $\ell>1/2$, thanks to a
refined version of Lemma \ref{lem:tame} (see \cite{CaGa-p}). However,
since it is not useful in the sequel to sharpen this assumption, we
choose to make the stronger assumption $\ell>1$ for the sake of
conciseness. 
\end{remark}

\begin{proof}[Proof of Lemma~\ref{lem:evol-norm}]
For fixed $t\in[0,T]$, \eqref{eq:evol-norm} yields
\begin{align*}
\frac{d}{dt} \|\phi\|_{\H_{\w(t)}^{\ell+1}}^2 + 2M
   \|\phi\|_{\H_{\w(t)}^{\ell+3/2}}^2&   = 2\RE\<\phi,\FF\>_{\H_{\w(t)}^{\ell+1}} \le 2\|\phi\|_{\H_{\w(t)}^{\ell+3/2}}\|\FF\|_{\H_{\w(t)}^{\ell+1/2}}.
   \end{align*}
By integration and Cauchy-Schwarz inequality in time, we get, for
every $t\in[0,T]$, 
\begin{align*}
\|\phi(t)\|_{\H_{\w(t)}^{\ell+1}}^2+2M\int_0^t
   \|\phi(\tau)\|_{\H_{\w(\tau)}^{\ell+3/2}}^2d\tau\le \|\phi(0)\|_{\H_{\w_0}^{\ell+1}}^2+2\|\phi\|_{L^2_T\H_\w^{\ell+3/2}}\|\FF\|_{L^2_T\H_{\w}^{\ell+1/2}},
\end{align*}
hence (\ref{eq:phiY}). The proof of (\ref{eq:aY}) is similar, taking into account that $$\RE\< i\theta_1\d_x^2 a,a\>_{\H_{\w(t)}^\ell}=0,$$ 
since $\theta_1\in \R$, and that 
$$\left|\<i\theta_2\d_x^2\tilde{a},a\>_{\H_{\w(t)}^\ell}\right|=|\theta_2|
\left|\int \<\xi\>^{2\ell}e^{2w\<\xi\>} \xi^2 \F(\tilde a
  )\overline{\F(a)}d\xi\right| 
\le|\theta_2|\|a\|_{\H_{\w(t)}^{\ell+1/2}}\|\tilde{a}\|_{\H_{\w(t)}^{\ell+3/2}}.$$
In order to prove (\ref{eq:mu1}), let us first use (\ref{eq:tame}) with $m=\ell+1/2$ and $s=\ell$: for every $t\in[0,T]$, we obtain
\begin{align*}
\|\FF\|_{\H_{\w(t)}^{\ell+1/2}}=\|\d_x\psi_1\d_x\psi_2\|_{\H_{\w(t)}^{\ell+1/2}}
\lesssim\left(\|\psi_1\|_{\H_{\w(t)}^{\ell+3/2}}\|\psi_2\|_{\H_{\w(t)}^{\ell+1}}+
\|\psi_1\|_{\H_{\w(t)}^{\ell+1}}\|\psi_2\|_{\H_{\w(t)}^{\ell+3/2}}\right). 
\end{align*}
Taking the $L^2$ norm in time in the last estimate, we get
\begin{equation*}
\|\FF\|_{L^2_T\H_{\w}^{\ell+1/2}}\lesssim
\left(\|\psi_1\|_{L^2_T\H_{\w}^{\ell+3/2}}\|\psi_2\|_{L^\infty_T\H_{\w}^{\ell+1}}+
\|\psi_1\|_{L^\infty_T\H_{\w}^{\ell+1}}\|\psi_2\|_{L^2_T\H_{\w}^{\ell+3/2}}\right), 
\end{equation*}
hence (\ref{eq:mu1}). The proofs of (\ref{eq:mu2}) and (\ref{eq:mu3})
are similar, thanks to multiple uses of (\ref{eq:tame}) with
$m=\ell+1/2$ and $s=\ell$. The proofs of (\ref{eq:nu1}),
(\ref{eq:nu2}) and (\ref{eq:nu3}) are also similar, except that
(\ref{eq:tame}) is now applied with $m=s=\ell-1/2>1/2$.  
\end{proof}

\begin{proof}[Proof of Theorem~\ref{th:wp}]
In view of the equation satisfied by $\phi_{j+1}^\eps$ in
(\ref{eq:iteration}), Lemma~\ref{lem:evol-norm} yields
\begin{align*}
  \vvvert  \phi_{j+1}^\eps\vvvert_{\ell+1,T}^2 &\le \|
  \phi_0^\eps\|_{\H_{\w_0}^{\ell+1}}^2 +\frac{C}{M} 
\vvvert \phi_{j+1}^\eps\vvvert_{\ell+1,T}^2 
\vvvert \phi_{j}^\eps\vvvert_{\ell+1,T}
+\frac{C}{M} 
\vvvert \phi_{j+1}^\eps\vvvert_{\ell+1,T}
\vvvert a_{j}^\eps\vvvert_{\ell,T}^{2\si}\\
&\quad +\frac{C}{M} \vvvert \phi_{j+1}^\eps\vvvert_{\ell+1,T}^2\vvvert a_{j}^\eps\vvvert_{\ell,T}^{2\gamma}.
\end{align*}
As for $a_{j+1}^\eps$, we obtain in a similar way
\begin{align*}
  \vvvert  a_{j+1}^\eps\vvvert_{\ell,T}^2 &\le \|
 a_0^\eps\|_{\H_{\w_0}^\ell}^2 +\frac{C}{M} 
\vvvert a_{j+1}^\eps\vvvert_{\ell,T}^2 
\vvvert \phi_{j}^\eps\vvvert_{\ell+1,T}+\frac{C}{M} 
\vvvert a_{j+1}^\eps\vvvert_{\ell,T}^2 
\vvvert a_{j}^\eps\vvvert_{\ell,T}^{2\gamma}.
\end{align*}
Under the condition
\begin{equation}\label{eq:rec}
\frac{C}{M}\vvvert\phi_j^\eps\vvvert_{\ell+1,T}\le\frac{1}{4},\qquad \frac{C}{M}\vvvert a_j^\eps\vvvert_{\ell,T}^{2\gamma}\le\frac{1}{4},
\end{equation}
we infer
\begin{align}\label{eq-hyp-rec-j+1}
\frac{1}{4}\vvvert\phi_{j+1}^\eps\vvvert_{\ell+1,T}^2\le\|\phi_0^\eps\|_{\H_{\w_0}^{\ell+1}}^2+\frac{C^2}{M^2}\vvvert a_j^\eps\vvvert_{\ell,T}^{4\sigma},\qquad
\frac{1}{2}\vvvert a_{j+1}^\eps\vvvert_{\ell,T}^2\le\|a_0^\eps\|_{\H_{\w_0}^{\ell}}^2.
\end{align}
{\bf First step: boundedness of the sequence.}
We show by induction that, provided $M$ is sufficiently
large, we can construct a sequence $(\phi_j^\eps, a_j^\eps)_{j\in\N}$
such that for every $j\in\N$,  
\begin{align}\label{eq:hyp-rec}
\vvvert\phi_{j}^\eps\vvvert_{\ell+1,T}^2\le4\|\phi_0^\eps\|_{\H_{\w_0}^{\ell+1}}^2+\frac{4C^2}{M^2}\(2\|a_0^\eps\|_{\H_{\w_0}^{\ell}}^2\)^{2\sigma},\qquad
\vvvert a_{j}^\eps\vvvert_{\ell,T}^2\le 2\|a_0^\eps\|_{\H_{\w_0}^{\ell}}^2.
\end{align}
For that purpose, we choose $M$ sufficiently large such that
(\ref{eq:rec}) holds for $j=0$ and such that  
\begin{align}\label{eq:cond-M}
4\|\phi_0^\eps\|_{\H_{\w_0}^{\ell+1}}^2+\frac{4C^2}{M^2}\(2\|a_0^\eps\|_{\H_{\w_0}^{\ell}}^2\)^{2\sigma}\le \frac{M^2}{16C^2},\qquad (2\|a_0^\eps\|_{\H_{\w_0}^{\ell}}^2)^\gamma\le \frac{M}{4C}.
\end{align}
Then, (\ref{eq:hyp-rec}) holds for $j=0$, since with
$(\phi_0^\eps,a_0^\eps)(t,x)=(\phi_0^\eps,a_0^\eps)(x)$ independent of
time,  it is
easy to check that $\vvvert
\phi_0^\eps\vvvert_{\ell+1,T}= \|\phi_0^\eps\|_{\H_{\w_0}^{\ell+1}}$
and $\vvvert
a_0^\eps\vvvert_{\ell,T}=\|a_0^\eps\|_{\H_{\w_0}^{\ell}}$. Let $j\ge
0$ and assume that (\ref{eq:hyp-rec}) holds. Then (\ref{eq:hyp-rec})
and (\ref{eq:cond-M}) ensure that the condition (\ref{eq:rec}) is
satisfied, and therefore (\ref{eq-hyp-rec-j+1}) holds, from which we
infer easily that (\ref{eq:hyp-rec}) is true for $j$ replaced by
$j+1$. 
\smallbreak

\noindent {\bf Second step: convergence.}
For $j\ge 1$, we set
$\delta\phi_j^\eps=\phi_j^\eps-\phi_{j-1}^\eps$, and 
$\delta a_j^\eps=a_j^\eps-a_{j-1}^\eps$. Then, for every $j\ge
1$, we have 
\begin{align*}
  &\d_t \delta\phi_{j+1}^\eps+ \frac{1}{2}\(\d_x \phi^\eps_j
      \d_x \delta \phi^\eps_{j+1}  + \d_x \delta \phi^\eps_j
      \d_x\phi^\eps_{j}\)+\frac{1}{2}g\(|a_j^\eps|^2\)\d_x\delta\phi_{j+1}^\eps\\
      &\qquad+\frac{1}{2}\(g\(|a_j^\eps|^2\)-g\(|a_{j-1}^\eps|^2\)\)\d_x\phi^\eps_{j} +f(|a_j^\eps|^2)-f(|a_{j-1}^\eps|^2)=0.
\end{align*}
Taking into account that
$$|a^\eps_j|^{2\gamma}-|a^\eps_{j-1}|^{2\gamma}=
\sum_{k=0}^{\gamma-1}(a^\eps_{j-1})^k\delta 
a^\eps_j(a^\eps_j)^{\gamma-1-k}\overline{a^\eps_j}^\gamma+
\sum_{k=0}^{\gamma-1}(a^\eps_{j-1})^\gamma
\overline{a^\eps_{j-1}}^k\overline{\delta
  a^\eps_j}\overline{a^\eps_j}^{\gamma-1-k}, $$
and that the same equality holds for $\gamma$ replaced by $\sigma$, Lemma \ref{lem:evol-norm} and (\ref{eq:hyp-rec}) imply
\begin{align*}
 \vvvert  \delta\phi_{j+1}^\eps\vvvert_{\ell+1,T}^2 &\le \frac{K}{M}\(  \vvvert  \delta\phi_{j+1}^\eps\vvvert_{\ell+1,T}^2 
+\vvvert  \delta\phi_{j}^\eps\vvvert_{\ell+1,T}^2 +
 \vvvert  \delta a_{j}^\eps\vvvert_{\ell,T}^2 \)
\end{align*}
for some $K>0$, which does not depend on $\eps$ provided
$(\phi_0^\eps, a_0^\eps)_{\eps\in [0,1]}$ is uniformly bounded in
$\H_{\w_0}^{\ell+1}\times \H_{\w_0}^{\ell}$. 
Thus, for $M$ large enough,
\begin{equation*}
    \vvvert  \delta\phi_{j+1}^\eps\vvvert_{\ell+1,T}^2  \le 
\frac{2K}{M}\(  \vvvert  \delta\phi_{j}^\eps\vvvert_{\ell+1,T}^2 +
 \vvvert  \delta a_{j}^\eps\vvvert_{\ell,T}^2 \).
\end{equation*}
Similarly, $\delta a_{j+1}^\eps$ solves
\begin{align*}
&  \d_t \delta a_{j+1}^\eps + \d_x \phi_j^\eps \d_x \delta
  a_{j+1}^\eps + \d_x \delta \phi_j^\eps \d_x a_j^\eps +
  \frac{1}{2}\delta a_{j+1}^\eps\d_x^2 \phi_j^\eps +
  \frac{1}{2}a_j^\eps\d_x^2 \delta \phi_j^\eps \\
&\quad  +\frac{1}{2}\d_x \(g\(|a_j^\eps|^2\)\)\delta a^\eps_{j+1}+
  \frac{1}{2}\d_x\(g\(|a_j^\eps|^2\)-g\(|a_{j-1}^\eps|^2\)\)a_j^\eps\\
 &\quad  +\frac{1}{2} h\(|a^\eps_j|^2\)\d_x a_j^\eps\overline{a_j^\eps} \delta a^\eps_{j+1}
 +\frac{1}{2} h\(|a^\eps_j|^2\)\d_x a_j^\eps\overline{\delta a_j^\eps}  a^\eps_{j}\\
&\quad +\frac{1}{2} h\(|a^\eps_j|^2\)\d_x \delta a_j^\eps\overline{ a_{j-1}^\eps} a^\eps_{j}
 +\frac{1}{2} \(h\(|a^\eps_j|^2\)-h\(|a^\eps_{j-1}|^2\)\)\d_x  a_{j-1}^\eps\overline{ a_{j-1}^\eps} a^\eps_{j}\\
&\quad  = i\frac{\eps}{2}\d_x^2
  \delta a_{j+1}^\eps, 
\end{align*}
so Lemma \ref{lem:evol-norm} and (\ref{eq:hyp-rec}) yield
\begin{equation*}
    \vvvert  \delta a_{j+1}^\eps\vvvert_{\ell,T}^2  \le 
\frac{2K}{M}\(  
\vvvert  \delta\phi_{j}^\eps\vvvert_{\ell+1,T}^2 +
 \vvvert  \delta a_{j}^\eps\vvvert_{\ell,T}^2 \).
\end{equation*}
We conclude  that provided $\ell>1$, possibly
increasing $M$, $(\phi_j^\eps,a_j^\eps)$ converges geometrically 
in $X_{\w,T}^\ell$
as $j\to \infty$. Uniqueness of the solution $(\phi^\eps, a^\eps)$ to
(\ref{eq:syst-grenier}) follows from the same kind of estimates as the
ones which prove the convergence. 
\end{proof}

\section{First order approximation} 
\begin{proof}[Proof of Theorem~\ref{th:conv-obs}]
Next, assume that
$(\phi_0,a_0)\in\H_{\w_0}^{\ell+2}\times\H_{\w_0}^{\ell+1} $. Then, in
view of Theorem~\ref{th:wp},
the solution $(\phi,a)$ to  \eqref{eq:syst-limit} belongs to
$X_{\w,T}^{\ell+1}$. Given $\eps>0$, if $(\phi_0^\eps,a_0^\eps)\in
\H_{\w_0}^{\ell+1}\times\H_{\w_0}^{\ell}$, we denote by
$(\phi^\eps,a^\eps)$ the solution to (\ref{eq:syst-grenier}). We also denote
$(\delta\phi^\eps, \delta a^\eps)=(\phi^\eps-\phi, a^\eps-a)$.  Then,
in the same fashion as above, we have 
\begin{align*}
  &\d_t \delta\phi^\eps+ \frac{1}{2}\(\d_x \phi^\eps
      \d_x \delta \phi^\eps + \d_x \delta \phi^\eps
      \d_x\phi\)+\frac{1}{2}g\(|a^\eps|^2\)\d_x\delta\phi^\eps\\
      &\qquad+\frac{1}{2}\(g\(|a^\eps|^2\)-g\(|a|^2\)\)\d_x\phi +f(|a^\eps|^2)-f(|a|^2)=0,
\end{align*}
and
\begin{align*}
&  \d_t \delta a^\eps + \d_x \phi^\eps \d_x \delta
  a^\eps + \d_x \delta \phi^\eps \d_x a +
  \frac{1}{2}\delta a^\eps\d_x^2 \phi^\eps +
  \frac{1}{2}a\d_x^2 \delta \phi^\eps \\
&\quad  +\frac{1}{2}\d_x \(g\(|a^\eps|^2\)\)\delta a^\eps+
  \frac{1}{2}\d_x\(g\(|a^\eps|^2\)-g\(|a|^2\)\)a\\
 &\quad  +\frac{1}{2} h\(|a^\eps|^2\)\d_x a^\eps\overline{a^\eps} \delta a^\eps
 +\frac{1}{2} h\(|a^\eps|^2\)\d_x a^\eps\overline{\delta a^\eps}  a\\
&\quad +\frac{1}{2} h\(|a^\eps|^2\)\d_x \delta a^\eps|a|^2
 +\frac{1}{2} \(h\(|a^\eps|^2\)-h\(|a|^2\)\)\d_x  a|a|^2
  = i\frac{\eps}{2}\d_x^2 \delta a^\eps+i\frac{\eps}{2}\d_x^2 a.
\end{align*}
For some new constant $K$, Lemma \ref{lem:evol-norm} and Theorem \ref{th:wp} imply, for $M$ large enough, 
\begin{equation*}
    \vvvert  \delta\phi^\eps\vvvert_{\ell+1,T}^2  \le 
K\|\phi_0^\eps-\phi_0\|_{\H_{\w_0}^{\ell+1}}^2+\frac{K}{M}
 \vvvert  \delta a^\eps\vvvert_{\ell,T}^2,
\end{equation*}
and
\begin{equation*}
    \vvvert  \delta a^\eps\vvvert_{\ell,T}^2  \le 
K\|a_0^\eps-a_0\|_{\H_{\w_0}^\ell}^2+\frac{K}{M} 
\vvvert  \delta\phi^\eps\vvvert_{\ell+1,T}^2+\frac{K}{M}\eps \vvvert\delta a^\eps\vvvert_{\ell,T}\vvvert a\vvvert_{\ell+1,T}.
\end{equation*}
Possibly increasing the value of $M$ and adding the last two inequalities, we deduce
\begin{equation*}
    \vvvert  \delta\phi^\eps\vvvert_{\ell+1,T}^2+\vvvert  \delta a^\eps\vvvert_{\ell,T}^2\le C\|\phi_0^\eps-\phi_0\|_{\H_{\w_0}^{\ell+1}}^2+C\|a_0^\eps-a_0\|_{\H_{\w_0}^\ell}^2+C\eps^2,
\end{equation*}
hence Theorem \ref{th:conv-obs}. As for the choice of $M$, a careful examination of the previous inequalities shows that aside from the assumption $M\ge M(\ell+1)$, which enables to estimate the source term, $M$ can be chosen as in Theorem \ref{th:wp}, namely such that $M\ge M(\ell)$.
\end{proof}
 
\begin{proof}[Proof of Corollary~\ref{cor:quad}]
Notice that,  provided  $\w\ge 0$,
\begin{equation}\label{eq:obvious}
  \|\psi\|_{H^\ell(\R)}\le \|\psi\|_{\H_\w^\ell}.
\end{equation}
In particular, Sobolev embedding yields, for $\ell>1/2$,
\begin{equation*}
  \|\psi\|_{L^\infty(\R)}\le C \|\psi\|_{\H_\w^\ell},
\end{equation*}
where $C$ is independent of $\w \ge 0$. 
With these remarks in mind, the $L^1$ estimates of
Corollary~\ref{cor:quad} follow from Theorem~\ref{th:conv-obs} and 
Cauchy-Schwarz inequality, since
\begin{equation*}
  \left\| |u^\eps|^2-|a|^2\right\|_{L^\infty_TL^1} =  \left\|
    |a^\eps|^2-|a|^2\right\|_{L^\infty_TL^1}  \le
  \|a^\eps+a\|_{L^\infty_T L^2} \|\delta a^\eps\|_{L^\infty_TL^2},
\end{equation*}
and
\begin{align*}
  \|\IM \(\eps \bar u^\eps \d_x u^\eps\)
  -|a|^2\d_x\phi\|_{L^\infty_TL^1} &\le \eps\|\IM \bar a^\eps \d_x
  a^\eps\|_{L^\infty_TL^1}  + \||a^\eps|^2\d_x\phi^\eps-|a|^2\d_x
  \phi\|_{L^\infty_TL^1}  \\
&\le \eps \|a^\eps\|^2_{L^\infty_TH^1}
  +\|a^\eps+a\|_{L^\infty_T L^2} \|\delta a^\eps\|_{L^\infty_TL^2}
  \|\d_x \phi\|_{L^\infty_TL^\infty} \\
&\quad+ \|a^\eps\|_{L^\infty_TL^\infty}
\|a^\eps\|_{L^\infty_TL^2}\|\delta \phi^\eps\|_{L^\infty_TH^1}.
\end{align*}
The $L^\infty$ estimates in space follow by replacing $L^1$ and $L^2$
by $L^\infty$ in the above inequalities, and using Sobolev embedding
again. 
\end{proof}

\section{Convergence of the wave function}
\begin{proof}[Proof of Theorem~\ref{th:conv-wf}]
Let $\ell>1$, and
$(\phi_0,a_0)\in\H_{\w_0}^{\ell+2}\times\H_{\w_0}^{\ell+1}$. Theorem~\ref{th:wp}
yields a unique solution
$(\phi,a)\in X_{\w,T}^{\ell+1}$  to \eqref{eq:syst-limit}.
\smallbreak

Let  $(\phi_{10},a_{10})\in \mathcal{H}_{\w_0}^{\ell+1}\times
\mathcal{H}_{\w_0}^{\ell}$. 
Like in Section~\ref{sec:cauchy}, we note that
\eqref{eq:syst-grenier-linearized} is a system of linear transport
equations whose coefficients are smooth functions. The general theory
of transport equations (see e.g. \cite[Section~3]{BCD11}) then shows that
\eqref{eq:syst-grenier-linearized} has a unique solution
$(\phi_1,a_1)\in C([0,T],L^2\times L^2)$. We already know by this
argument that the solution is actually more regular (in terms of
Sobolev regularity), but we shall
directly use a priori estimates in $\H^\ell_\w$ spaces. Indeed,
Lemma \ref{lem:evol-norm} implies that $(\phi_1,a_1)\in X_{\w,T}^\ell$ with
\begin{align*}
&\vvvert\phi_1\vvvert_{\ell+1,T}^2\le \|\phi_{10}\|_{\mathcal{H}_{\w_0}^{\ell+1}}^2
+\frac{C}{M}\vvvert\phi_1\vvvert_{\ell+1,T}^2\vvvert\phi\vvvert_{\ell+1,T}+\frac{C}{M}\vvvert\phi_1\vvvert_{\ell+1,T}^2\vvvert a\vvvert_{\ell,T}^{2\gamma}\\
&\qquad+\frac{C}{M}\vvvert\phi_1\vvvert_{\ell+1,T}\vvvert\phi\vvvert_{\ell+1,T}\vvvert a\vvvert_{\ell,T}^{2\gamma-1}\vvvert a_1\vvvert_{\ell,T}+\frac{C}{M}\vvvert\phi_1\vvvert_{\ell+1,T}\vvvert a\vvvert_{\ell,T}^{2\sigma-1}\vvvert 
a_1\vvvert_{\ell,T},
\end{align*}
along with
\begin{align*}
\vvvert a_1\vvvert_{\ell,T}^2&\le \|a_{10}\|_{\mathcal{H}_{\w_0}^{\ell}}^2+\frac{C}{M}\vvvert a_1\vvvert_{\ell,T}\vvvert a\vvvert_{\ell,T}\vvvert\phi_1\vvvert_{\ell+1,T}+\frac{C}{M}\vvvert a_1\vvvert_{\ell,T}^2\vvvert\phi\vvvert_{\ell+1,T}\\
&\qquad+\frac{C}{M}\vvvert a_1\vvvert_{\ell,T}^2\vvvert a\vvvert_{\ell,T}^{2\gamma}+\frac{C}{M}\vvvert a_1\vvvert_{\ell,T}\vvvert a\vvvert_{\ell+1,T},
\end{align*}
for some $C>0$. 

Let $\ell>1$. For $(\phi_0,a_0)\in \H_{\w_0}^{\ell+3}\times
\H_{\w_0}^{\ell+2}$ and  $(\phi_{10},a_{10})\in
\H_{\w_0}^{\ell+2}\times \H_{\w_0}^{\ell+1}$, we consider:
\begin{itemize}
\item $(\phi,a)\in X_{\w,T}^{\ell+2}$ the solution to \eqref{eq:syst-limit}.
\item $(\phi_1,a_1)\in X_{\w,T}^{\ell+1}$ the solution to
  \eqref{eq:syst-grenier-linearized}.
\item $(\phi_{\rm app}^\eps,a_{\rm app}^\eps)=(\phi,a)+\eps(\phi_1,a_1)$.
\item $(\phi^\eps,a^\eps)\in X_{\w,T}^\ell$ the solution to
  \eqref{eq:syst-grenier}.
\end{itemize}
We assume that $\|\phi_0^\eps- \phi_0-\eps\phi_{10}\|_{\H_{\w_0}^{\ell+1}}=o(\eps)$ and $\|a_0^\eps-a_0-\eps a_{10}\|_{\H_{\w_0}^\ell}=o(\eps)$.
Set 
\begin{align*}
\delta\phi_1^\eps &=\phi^\eps-\phi_{\rm app}^\eps=\phi^\eps-\phi-\eps\phi_1=\delta\phi^\eps-\eps\phi_1,\\
\delta a_1^\eps &=a^\eps-a_{\rm app}^\eps=a^\eps-a-\eps a_1=\delta
                  a^\eps-\eps a_1.
\end{align*}
The equation satisfied by $\delta\phi_1^\eps$ writes
\begin{align*}
  &\d_t \delta\phi_1^\eps+ \d_x\phi\d_x \delta \phi_1^\eps + \frac{1}{2}|\d_x\delta \phi^\eps|^2\\
&     +\frac{1}{2}\left(g\(|a^\eps|^2\)-g\(|a|^2\)-2g'\(|a|^2\)\RE(\overline{a}\eps a_1)\right)\d_x\phi\\
&+\frac{1}{2}\(g\(|a^\eps|^2\)-g\(|a|^2\)\)\eps\d_x\phi_1+\frac{1}{2}g\(|a^\eps|^2\)\d_x\delta\phi_1^\eps\\
     & +f(|a^\eps|^2)-f(|a|^2)-2f'(|a|^2)\RE(\overline{a}\eps a_1)=0
\end{align*}
Moreover, Taylor's formula yields
\begin{align}\label{taylor}
&g(|a^\eps|^2)-g(|a|^2)-2g'(|a|^2)\RE(\overline{a}\eps a_1)\\
\nonumber&\qquad=2g'(|a|^2)\RE(\overline{a}\delta a_1^\eps)+4\RE\left(\overline{a}\delta a^\eps\right)^2\int_0^1(1-s)g''(|a+s\delta a^\eps|^2)ds,
\end{align}
and the same identity holds for $g$ replaced by $f$. Thus, taking into
account Theorem~\ref{th:wp}, which implies
$\vvvert\phi^\eps\vvvert_{\ell+1,T},\vvvert
a^\eps\vvvert_{\ell,T}=\O(1)$, and Theorem \ref{th:conv-obs}, which
implies $\vvvert\delta\phi^\eps\vvvert_{\ell+1,T},\vvvert \delta
a^\eps\vvvert_{\ell,T}=\O(\eps)$, it follows from Lemma
\ref{lem:evol-norm} that  
\begin{align*}
   \vvvert  \delta\phi_{1}^\eps\vvvert_{\ell+1,T}^2 
&\le \|\phi_0^\eps-\phi_0-\eps\phi_{10}\|_{\H_{\w_0}^\ell}
  ^2+\frac{C}{M}\vvvert\delta\phi_1^\eps\vvvert_{\ell+1,T}
  \left(\eps^2+\vvvert\delta\phi_1^\eps\vvvert_{\ell+1,T} 
+\vvvert\delta a_1^\eps\vvvert_{\ell,T}\right).
\end{align*}
 We deduce
\begin{align}\label{eq:df1}
\vvvert  \delta\phi_{1}^\eps\vvvert_{\ell+1,T}^2 
&\le C\|\phi_0^\eps-\phi_0-\eps\phi_{10}\|_{\H_{\w_0}^\ell}  ^2+\frac{C}{M}\eps^4+\frac{C}{M}\vvvert\delta a_1^\eps\vvvert_{\ell,T}^2.
\end{align}
Similarly, $\delta a_1^\eps$ solves
\begin{align*}
&\d_t \delta a_1^\eps+\d_x\phi\d_x\delta a_1^\eps+\d_x\delta\phi_1^\eps\d_x a+\d_x\delta\phi^\eps\d_x \delta a^\eps\\
&\quad+\frac{1}{2}a\d_x^2\delta\phi_1^\eps+\frac{1}{2}\delta a_1^\eps\d_x^2\phi+\frac{1}{2}\delta a^\eps\d_x^2\delta\phi^\eps\\
&\quad+\d_x\left[\left(g(|a^\eps|^2)-g(|a|^2)-2\eps g'(|a|^2)\RE(\overline{a}a_1)\right)a\right]\\
&\quad+\d_x\left[\left(g(|a^\eps|^2)-g(|a|^2)\right)\eps a_1\right]+\d_x\left[g(|a^\eps|^2)\delta a_1^\eps\right]=\frac{i\eps^2}{2}\d_x^2a_1+\frac{i\eps}{2} \d_x^2\delta a_1^\eps.
\end{align*}
From (\ref{taylor}),  Theorems~\ref{th:wp} and \ref{th:conv-obs}, and
Lemma~\ref{lem:evol-norm}, we deduce
\begin{align}\label{eq:da1}
\vvvert\delta a_1^\eps\vvvert_{\ell,T}^2\leq C\|a_0^\eps-a_0-\eps
  a_{10}\|_{\H_{\w_0}^\ell}^2+\frac{C}{M}\eps^4+\frac{C}{M}\vvvert\delta
  \phi_1^\eps\vvvert_{\ell+1,T}^2. 
\end{align}
Adding (\ref{eq:df1}) and (\ref{eq:da1}), \eqref{eq:est2} follows. Like in the proof of Theorem \ref{th:conv-obs}, a
careful examination of the inequalities that we have used shows that
all the above estimates are valid provided that we assume $M\ge M(\ell)$, the
constant provided by Theorem~\ref{th:wp}, and also $M\ge \max(M(\ell+1),M(\ell+2))$ in order to estimate the source terms. 
\smallbreak

To
complete the proof of 
Theorem \ref{th:conv-wf}, consider the point-wise estimate
\begin{align*}
  \left|a^\eps e^{i\phi^\eps/\eps}-a e^{i\phi_1}e^{i\phi/\eps}
  \right|&\le \left|a^\eps-a \right|
+\left|a^\eps \right|
\left|e^{i\phi^\eps/\eps}-e^{i(\phi+\eps\phi_1)/\eps} \right|\\
&\le \left|a^\eps-a \right|
+\left|a^\eps \right|
\left|2\sin\(\frac{\phi^\eps-\phi-\eps\phi_1 }{2\eps}\)\right|\\
&\le \left|\delta a^\eps \right|
+\frac{1}{\eps}\left|a^\eps \right|
\left|\delta \phi_1^\eps\right|.
\end{align*}
We then conclude like in the proof of Corollary~\ref{cor:quad}, by
using Cauchy-Schwarz inequality, \eqref{eq:obvious}, and Sobolev
embedding.  
\end{proof}
\begin{remark}
  The last step in the above proof relies on the estimate
  $\delta\phi_1^\eps=o(\eps)$ in suitable spaces. Regarding the error
  estimate on $a^\eps$, only $\delta a^\eps$ appears. Recall however
  that $(\delta \phi_1^\eps,\delta a_1^\eps)$ solves a \emph{coupled}
  system, so it is necessary to show that $\delta a_1^\eps=o(\eps)$
  too (see also \cite{CaBook}). 
\end{remark}

\subsection*{Acknowledgements}
The authors are grateful to Christof Sparber for attracting their
attention on this problem.

\bibliographystyle{siam}
\bibliography{biblio}

\end{document}